\newcommand{\argmin}{\operatornamewithlimits{arg\,min}}
\newcommand{\xpmatrix}[1]{\begin{pmatrix} #1 \end{pmatrix}}
\newcommand{\ra}[1]{\renewcommand{\arraystretch}{1}}\addtolength{\tabcolsep}{-1pt}
\newcommand{\beeq}[1]{\begin{equation}\label{#1}}
\newcommand{\eneq}{\end{equation}}
\newtheorem{algor}{{\sc Algorithm}}[section]
\def\betab{\begin{tabbing} 
xxxx\=xxxx\=xxx\=xx\=xx\=xx\=xx\=xx\=xx\=xx\=xx\=xx\=xx\= \kill} 
\def\entab{\end{tabbing}\vspace{-0.12in}}
\newenvironment{algorithm0}{\begin{algor} \sl }{ \end{algor} }
\title{Beyond AMLS: Domain decomposition with rational filtering\thanks{This work supported 
by NSF under award CCF-1505970. 
}}
\author{ vassilis kalantzis\thanks{Address: Computer Science \& Engineering, University of Minnesota, Twin Cities. 
{\tt \{kalan019,yxi,saad\} @umn.edu} } \and yuanzhe xi\footnotemark[2] \and yousef saad\footnotemark[2] }
\begin{document}

\maketitle

\begin{abstract}
This paper proposes a rational filtering domain decomposition technique for the solution of 
large and sparse symmetric generalized eigenvalue problems. The proposed technique is purely 
algebraic and decomposes the eigenvalue problem associated with each subdomain into two 
disjoint subproblems. The first subproblem is associated with the interface variables and 
accounts for the interaction among neighboring subdomains. To compute the solution of the 
original eigenvalue problem at the interface variables we leverage ideas from contour integral 
eigenvalue solvers. The second subproblem is associated with the interior variables in each 
subdomain and can be solved in parallel among the different subdomains using real arithmetic 
only. Compared to rational filtering projection methods applied to the original matrix pencil, 
the proposed technique integrates only a part of the matrix resolvent while it applies any 
orthogonalization necessary to vectors whose length is equal to the number of interface variables. 
In addition, no estimation of the number of eigenvalues lying inside the interval of interest 
is needed. Numerical experiments performed in distributed memory architectures illustrate the 
competitiveness of the proposed technique against rational filtering Krylov approaches.
\end{abstract}

\begin{keywords}
Domain decomposition, Schur complement, symmetric generalized eigenvalue problem, rational 
filtering, parallel computing
\end{keywords}

\begin{AMS}
65F15, 15A18, 65F50
\end{AMS}

\section{Introduction}

The typical approach to solve large and sparse symmetric eigenvalue problems of the form 
$Ax=\lambda M x$ is via a Rayleigh-Ritz (projection) process on a low-dimensional subspace
that spans an invariant subspace associated with the $nev\geq1$ eigenvalues of interest, 
e.g., those located inside the real interval $[\alpha,\beta]$. 



One of the main bottlenecks of Krylov projection methods in large-scale eigenvalue computations 
is the cost to maintain the orthonormality of the basis of the Krylov subspace; especially when 
$nev$ runs in the order of hundreds or thousands. To reduce the orthonormalization and memory 
costs, it is typical to enhance the convergence rate of the Krylov projection method of choice 
by a filtering acceleration technique so that eigenvalues located outside the interval of interest 
are damped to (approximately) zero. For generalized eigenvalue problems, a standard choice is to 
exploit rational filtering techniques, i.e., to transform the original matrix pencil into a 
complex, rational matrix-valued function \cite{VanBarel2016526,FEAST,pfeast,doi:10.1137/13090866X,feastnonsym,SaSu,
doi:10.1137/16M1061965,VanBarel2015}. While rational filtering approaches reduce orthonormalization 
costs, their main bottleneck is the application the transformed pencil, i.e., the solution of the 
associated linear systems.

An alternative to reduce the computational costs (especially that of orthogonalization) 
in large-scale eigenvalue computations is to consider domain decomposition-type 
approaches (we refer to \cite{Smith:1996:DDP:238150,toselli2005domain} for an in-depth 
discussion of domain decomposition). Domain decomposition decouples the original 
eigenvalue problem into two separate subproblems; one defined locally in the interior 
of each subdomain, and one defined on the interface region connecting neighboring 
subdomains. Once the original eigenvalue problem is solved for the interface region, the 
solution associated with the interior of each subdomain is computed independently of the 
other subdomains \cite{Philippe20071471,AMLS,AMLS2,Lui200017,Lui199835,DDSpec,10.2307/2158123}.
When the number of variables associated with the interface region is much smaller than 
the number of global variables, domain decomposition approaches can provide approximations 
to thousands of eigenpairs while avoiding excessive orthogonalization costs. One 
prominent such example is the Automated Multi-Level Substructuring (AMLS) method 
\cite{AMLS,AMLS2,Gao:2008:IEA:1377596.1377600}, originally developed by the structural 
engineering community for the frequency response analysis of Finite Element automobile 
bodies. AMLS has been shown to be considerably faster than the NASTRAN industrial package 
\cite{KOMZSIK1991167} in applications where $nev \gg 1$. However, the accuracy provided 
by AMLS is good typically only for eigenvalues that are located close to a user-given 
real-valued shift \cite{AMLS2}.



In this paper we describe the Rational Filtering Domain Decomposition Eigenvalue Solver (RF-DDES), 
an approach which combines domain decomposition with rational filtering. Below, we list the main 
characteristics of RF-DDES:

1) \emph{Reduced complex arithmetic and orthgonalization costs.} Standard rational filtering 
techniques apply the rational filter to the entire matrix pencil, i.e., they require the solution of 
linear systems with complex coefficient matrices of the form $A-\zeta M$ for different values 
of $\zeta$. In contrast, RF-DDES applies the rational filter only to that part of $A-\zeta M$ 
that is associated with the interface variables. As we show later, this approach has several 
advantages: a) if a Krylov projection method is applied, orthonormalization needs to be applied 
to vectors whose length is equal to the number of interface variables only, b) while RF-DDES 
also requires the solution of complex linear systems, the associated computational cost is 
lower than that of standard rational filtering approaches, c) focusing on the interface 
variables only makes it possible to achieve convergence of the Krylov projection method in even 
fewer than $nev$ iterations. In contrast, any Krylov projection method applied to a rational 
transformation of the original matrix pencil must perform at least $nev$ iterations.

2) \emph{Controllable approximation accuracy.}
Domain decomposition approaches like AMLS might fail to provide high accuracy for all eigenvalues 
located inside $[\alpha,\beta]$. This is because AMLS solves only an approximation of the original 
eigenvalue problem associated with the interface variables of the domain. In contrast, RF-DDES can 
compute the part of the solution associated with the interface variables highly accurately. As a
result, if not satisfied with the accuracy obtained by RF-DDES, one can simply refine the part of
the solution that is associated with the interior variables.

3) \emph{Multilevel parallelism.}
The solution of the original eigenvalue problem associated with the interior variables of each 
subdomain can be applied independently in each subdomain, and requires only real arithmetic.
Moreover, being a combination of domain decomposition and rational filtering techniques, RF-DDES 
can take advantage of different levels of parallelism, making itself appealing for execution 
in high-end computers. We report results of experiments performed in distributed memory environments 
and verify the effectiveness of RF-DDES.

Throughout this paper we are interested in computing the $nev \geq 1$ eigenpairs $(\lambda_i,x^{(i)})$ 
of $Ax^{(i)}=\lambda_iMx^{(i)},\ i=1,\ldots,n$, for which $\lambda_i \in [\alpha,\beta]$, $\alpha \in 
\mathbb{R},\ \beta \in \mathbb{R}$. The $n\times n$ matrices $A$ and $M$ are assumed large, sparse and 
symmetric while $M$ is also positive-definite (SPD). 
For brevity, we will refer to the linear SPD matrix pencil $A-\lambda M$ simply as $(A,M)$.

The structure of this paper is as follows: Section \ref{section2} describes the general working of 
rational filtering and domain decomposition eigenvalue solvers. Section \ref{section3} describes 
computational approaches for the solution of the eigenvalue problem associated with the interface 
variables. Section \ref{interior_part} describes the solution of the original eigenvalue problem 
associated with the interior variables in each subdomain. Section \ref{main_algorithm} combines all 
previous discussion into the form of an algorithm. Section \ref{numerical_experiments} presents  
experiments performed on model and general matrix pencils. Finally, Section \ref{conclusion} contains our concluding remarks.

\section{Rational filtering and domain decomposition eigensolvers} \label{section2}

In this section we review the concept of rational filtering for the solution of real 
symmetric generalized eigenvalue problems. In addition, we present a prototype 
Krylov-based rational filtering approach to serve as a baseline algorithm, while also 
discuss the solution of symmetric generalized eigenvalue problems from a domain 
decomposition viewpoint.

Throughout the rest of this paper we will assume that the eigenvalues of $(A,M)$ 
are ordered so that eigenvalues $\lambda_1,\ldots,\lambda_{nev}$ are located within
$[\alpha,\beta]$ while eigenvalues $\lambda_{nev+1},\ldots,\lambda_{n}$ are located
outside $[\alpha,\beta]$.

\subsection{Construction of the rational filter}     \label{filter_func}

The classic approach to construct a rational filter function $\rho(\zeta)$ is to exploit 
the Cauchy integral representation of the indicator function $I_{[\alpha,\beta]}$, where  
$I_{[\alpha,\beta]}(\zeta)=1,\ {\rm iff}\ \zeta \in[\alpha,\beta]$, and $I_{[\alpha,\beta]}
(\zeta)=0,\ {\rm iff}\ \zeta \notin[\alpha,\beta]$; see the related discussion in 
\cite{VanBarel2016526,FEAST,pfeast,doi:10.1137/13090866X,feastnonsym,SaSu,doi:10.1137/16M1061965,
VanBarel2015} (see also \cite{doi:10.1137/140980090,doi:10.1137/16M1061965,winkelmann2017non,
doi:10.1137/140984129} for other filter functions not based on Cauchy's formula). 

Let $\Gamma_{[\alpha,\beta]}$ be a smooth, closed contour that encloses only those 
$nev$ eigenvalues of $(A,M)$ which are located inside $[\alpha,\beta]$, e.g., a 
circle centered at $(\alpha+\beta)/2$ with radius $(\beta-\alpha)/2$. We then have
\begin{equation} \label{eq:contour}
I_{[\alpha,\beta]}(\zeta) = \dfrac{-1}{2\pi i}\int_{\Gamma_{[\alpha,\beta]}} \dfrac{1}{\zeta-\nu} d\nu,
\end{equation}
where the integration is performed counter-clockwise. 
The filter function ${\rho}(\zeta)$ can be obtained by applying a quadrature rule to discretize 
the right-hand side in (\ref{eq:contour}):
\begin{equation} \label{eq:contour2}
{\rho}(\zeta) = \sum_{\ell=1}^{2N_c} \frac{\omega_{\ell}}{\zeta -\zeta_{\ell}}, 
\end{equation}
where $\{\zeta_{\ell},\omega_{\ell}\}_{1\leq \ell\leq 2N_c}$ are the poles and weights of the 
quadrature rule. If the $2N_c$ poles in (\ref{eq:contour2}) come in conjugate pairs, and the 
first $N_c$ poles lie on the upper half plane, (\ref{eq:contour2}) can be simplified into
\begin{equation} \label{eq:contour3}
{\rho}(\zeta) = 2\Re e \left \lbrace \sum_{\ell=1}^{N_c} \frac{\omega_{\ell}}{\zeta -\zeta_{\ell}} \right \rbrace, \ \text{when} \ \ \zeta \in \mathbb{R}.
\end{equation}

\begin{figure} 
\centering
\includegraphics[width=0.49\textwidth]{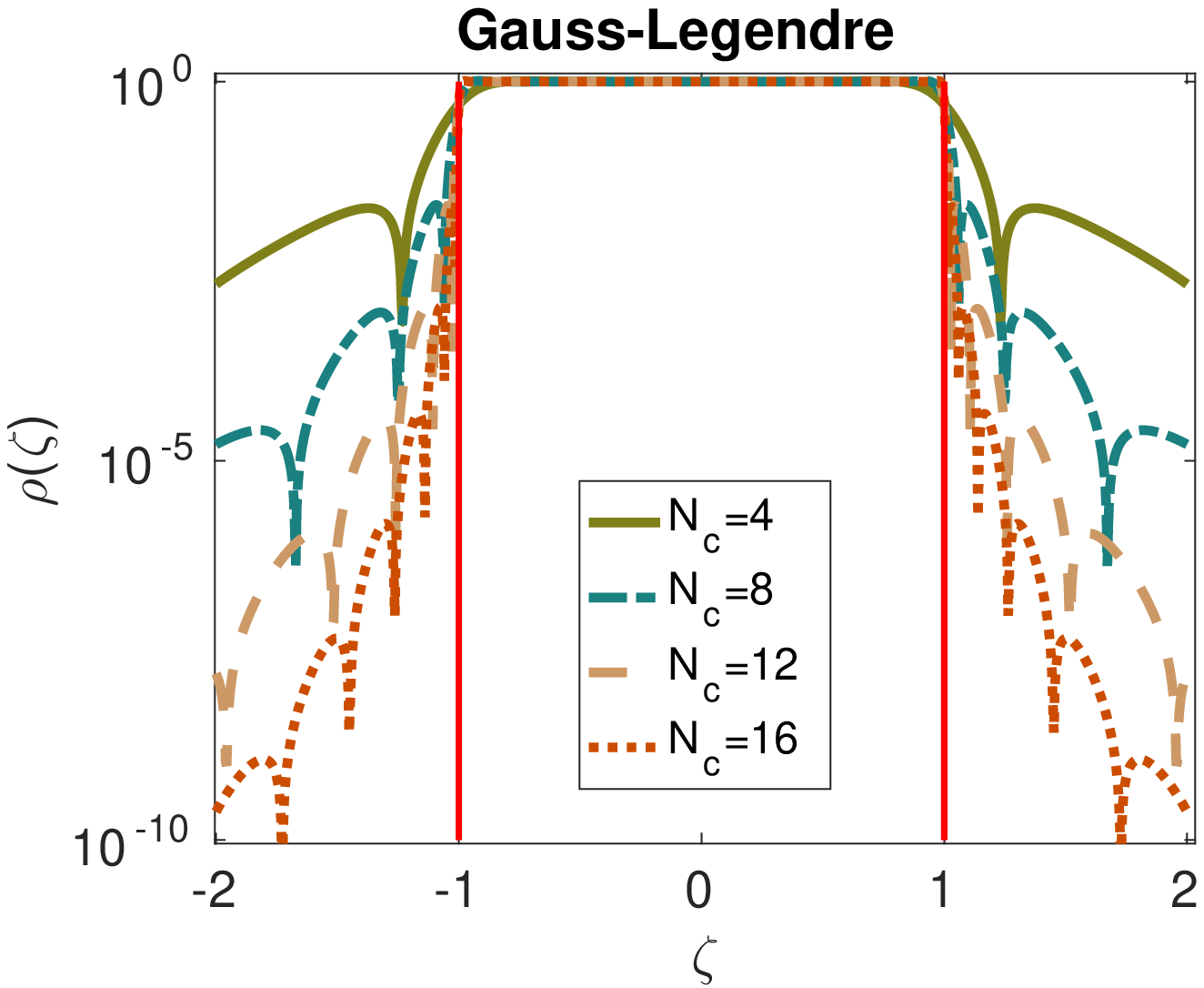}
\includegraphics[width=0.49\textwidth]{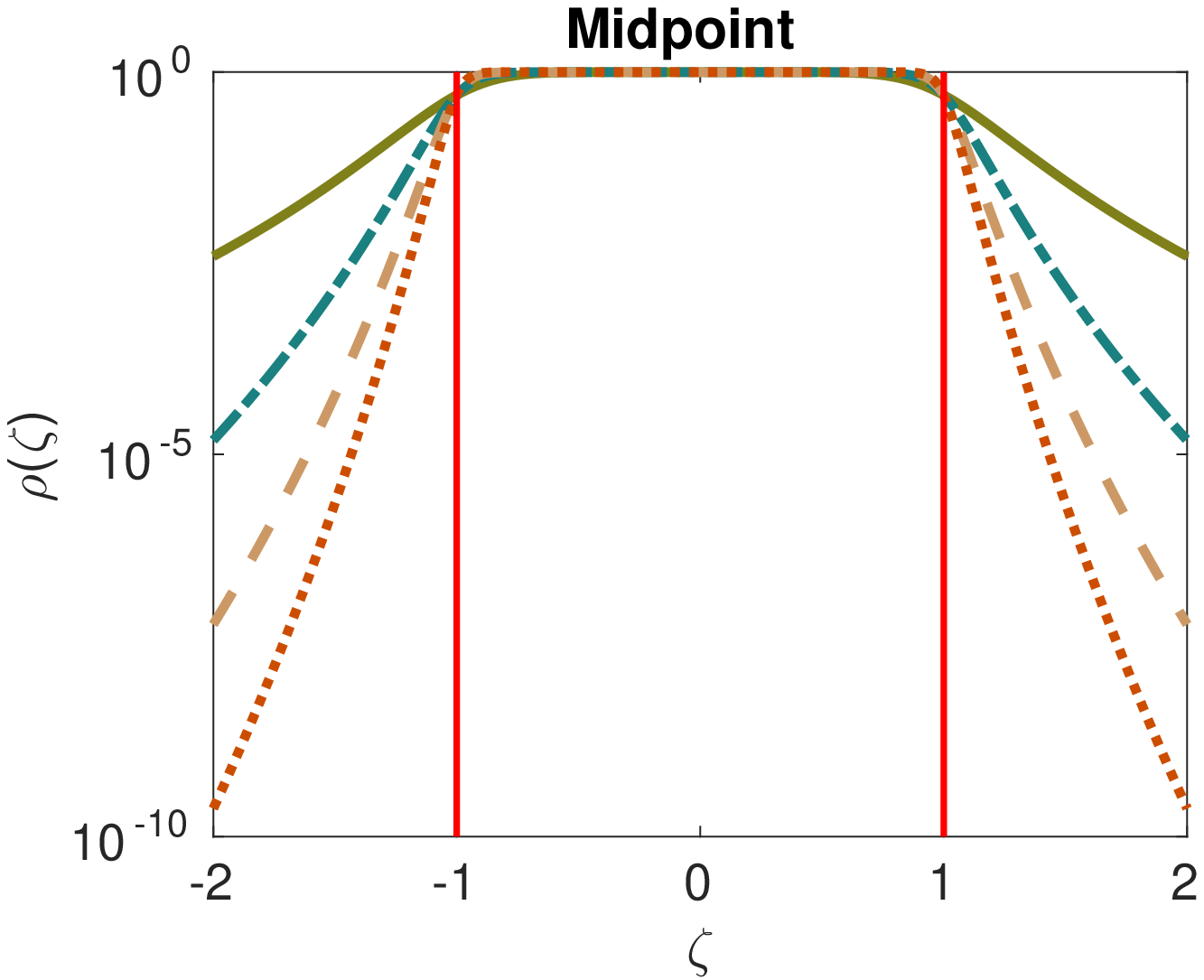}
\caption{The modulus of the rational filter function $\rho(\zeta)$ when $\zeta \in [-2,2]$.
Left: Gauss-Legendre rule. Right: Midpoint rule.\label{fig:filter_value}}
\end{figure}

Figure \ref{fig:filter_value} plots the modulus of the rational function $\rho(\zeta)$ 
(scaled such that $\rho(\alpha) \equiv \rho(\beta) \equiv 1/2$) in the interval $\zeta 
\in [-2,2]$, where $\{\zeta_{\ell},\omega_{\ell}\}_{1\leq \ell \leq 2N_c}$ are obtained 
by numerically approximating $I_{[-1,1]}(\zeta)$ by the Gauss-Legendre rule (left) and 
Midpoint rule (right). Notice that as $N_c$ increases, $\rho(\zeta)$ becomes a more 
accurate approximation of $I_{[-1,1]}(\zeta)$ \cite{doi:10.1137/16M1061965}. Throughout
the rest of this paper, we will only consider the Midpoint rule \cite{Abramowitz:1974:HMF:1098650}.

\subsection{Rational filtered Arnoldi procedure}

Now, consider the rational matrix function $\rho(M^{-1}A)$ with $\rho(.)$ defined as in 
(\ref{eq:contour2}):
\begin{equation} \label{rhoAM}
\rho(M^{-1}A) =  2\Re e \left \lbrace \sum_{\ell=1}^{N_c} \omega_{\ell} (A-\zeta_{\ell} M)^{-1}M \right \rbrace. 
\end{equation}
The eigenvectors of $\rho(M^{-1}A)$ are identical to those of $(A,M)$, while the corresponding
eigenvalues are transformed to $\left\{\rho(\lambda_j)\right\}_{j=1,\ldots,
n}$. Since $\rho(\lambda_{1}),\ldots,\rho(\lambda_{nev})$ are all larger than $\rho(\lambda_{nev+1}),\ldots,\rho(\lambda_{n})$, 
the eigenvalues of $(A,M)$ located inside $[\alpha,\beta]$ become the dominant ones in 
$\rho(M^{-1}A)$. Applying a projection method to $\rho(M^{-1}A)$ can then lead to fast 
convergence towards an invariant subspace associated with the eigenvalues of $(A,M)$
located inside $[\alpha,\beta]$.

One popular choice as the projection method in rational filtering approaches is that of 
subspace iteration, e.g., as in the FEAST package \cite{FEAST,pfeast,feastnonsym}. One 
issue with this choice is that an estimation of $nev$ needs be provided in order to 
determine the dimension of the starting subspace. In this paper, we exploit Krylov subspace 
methods to avoid the requirement of providing an estimation of $nev$.

\vspace*{2mm}
\begin{minipage}{\dimexpr\linewidth-9\fboxsep-9\fboxrule\relax}
\vbox{
\begin{algorithm0}{RF-KRYLOV} \label{alg:arnoldi_filter}
\betab
\>0. \> Start with $q^{(1)} \in \mathbb{R}^n\ s.t.\ \|q^{(1)}\|_2 = 1$    \\
\>1. \> For $\mu=1,2,\ldots$                                                                  \\
\>2. \> \> Compute $w= 2\Re e \left\lbrace \sum_{\ell=1}^{N_c}\omega_{\ell} (A-\zeta_{\ell} M)^{-1}Mq^{(\mu)} \right\rbrace$   \\
\>3. \> \> For $\kappa=1,\ldots,\mu$                                                          \\
\>4. \> \> \> $h_{\kappa,\mu}=w^T q^{(\kappa)}$                                                  \\
\>5. \> \> \> $w = w - h_{\kappa,\mu}q^{(\kappa)}$                                   \\
\>6. \> \> End                                                                                \\
\>7. \> \> $h_{\mu+1,\mu} = \|w\|_2$                  \\
\>8. \> \> If $h_{\mu+1,\mu} =0$\\
\>9. \>\>\> generate a unit-norm $q^{(\mu+1)}$ orthogonal to $q^{(1)},\ldots, q^{(\mu)}$\\
\>10. \>\> Else\\
\>11. \> \> \> $q^{(\mu+1)} = w / h_{\mu+1,\mu}$   \\
\>12 \> \> EndIf  \\
\>13. \> \> If the sum of eigenvalues of $H_{\mu}$ no less than $1/2$ is unchanged\\
\> \> \> during the last few iterations; BREAK; EndIf                                              \\
\>14.\> End                                                                                   \\
\>15. \> Compute the eigenvectors of $H_{\mu}$ and form the Ritz vectors of $(A,M)$ \\
\>16. \> For each Ritz vector $\hat{q}$, compute the corresponding approximate \\ \> \> Ritz value as the Rayleigh quotient $\hat{q}^TA\hat{q}/\hat{q}^TM\hat{q}$\\
\entab
\end{algorithm0}
}
\end{minipage}
\vspace*{1mm}

Algorithm \ref{alg:arnoldi_filter} sketches the Arnoldi procedure applied to $\rho(M^{-1}A)$ 
for the computation of all eigenvalues of $(A,M)$ located inside $[\alpha,\beta]$ and
associated eigenvectors. Line 2 computes the ``filtered'' vector $w$ by applying 
the matrix function $\rho(M^{-1}A)$ to $q^{(\mu)}$, which in turn requires the solution of 
the $N_c$ linear systems associated with matrices $A-\zeta_{\ell}M,\ \ell=1,\ldots,N_c$. 
Lines 3-12 orthonormalize $w$ against the previous Arnoldi vectors $q^{(1)},\ldots,q^{(\mu)}$ 
to produce the next Arnoldi vector $q^{(\mu+1)}$. Line 13 checks the sum of those eigenvalues 
of the upper-Hessenberg matrix $H_\mu$ which are no less than $1/2$. If this sum remains constant 
up to a certain tolerance, the outer loop stops. Finally, line 16 computes the Rayleigh quotients 
associated with the approximate eigenvectors of $\rho(M^{-1}A)$ (the Ritz vectors obtained in
line 15). 

Throughout the rest of this paper, Algorithm \ref{alg:arnoldi_filter} will be abbreviated 
as RF-KRYLOV. 


\subsection{Domain decomposition framework} \label{ddf}

Domain decomposition eigenvalue solvers \cite{ddfeast,DDSpec,AMLS,AMLS2} compute spectral 
information of $(A,M)$ by decoupling the original eigenvalue problem into two separate 
subproblems: one defined locally in the interior of each subdomain, and one restricted to 
the interface region connecting neighboring subdomains. Algebraic domain decomposition 
eigensolvers start by calling a graph partitioner \cite{SCOTCH,METIS-SIAM} to decompose the 
adjacency graph of $|A|+|M|$ into $p$ non-overlapping subdomains. If we then order the 
interior variables in each subdomain before the interface variables across all subdomains, 
matrices ${A}$ and ${M}$ then take the following block structures: 
\begin{equation} \label{dd1}
A = 
\xpmatrix{
B_1      &        &           &          &  E_{1 }  \cr
         & B_2    &           &          &  E_{2 }  \cr
         &        &   \ddots  &          &  \vdots  \cr
         &        &           &  B_p     &   E_p    \cr
 E_1^T   & E_2^T  &   \ldots  &  E_p^T   &   C      \cr
},\ \ \ 
M = 
\xpmatrix{
M_B^{(1)}   &           &           &             &  M_E^{(1)}  \cr
            & M_B^{(2)} &           &             &  M_E^{(2)}  \cr
            &           &   \ddots  &             &  \vdots     \cr
            &           &           &  M_B^{(p)}  &  M_E^{(p)}  \cr
 M_E^{(1)T} & M_E^{(2)T}&   \ldots  &  M_E^{(p)T} &   M_C       \cr
}.
\end{equation}
If we denote the number of interior and interface variables lying in the $j$th subdomain 
by $d_j$ and $s_j$, respectively, and set $s=\sum_{j=1}^p s_j$, then $B_j$ and $M_B^{(j)}$ 
are square matrices of size $d_j\times d_j$, $E_j$ and $M_E^{(j)}$ are rectangular matrices 
of size $d_j\times s_j$, and $C$ and $M_C$ are square matrices of size $s\times s$. Matrices 
$E_i,\ M_E^{(j)}$ have a special nonzero pattern of the form $E_j=[0_{d_j,\ell_j},\hat{E}_j,
0_{d_j,\nu_j}]$, and $M_E^{(j)}=[0_{d_j,\ell_j},\hat{M}_E^{(j)},0_{d_j,\nu_j}]$, where 
$\ell_j=\sum_{k=1}^{k<j}s_k$, $\nu_j = \sum_{k>j}^{k=p}s_k$, and $0_{\chi,\psi}$ denotes 
the zero matrix of size $\chi \times \psi$.

Under the permutation (\ref{dd1}), $A$ and $M$ can be also written in a compact form as:
\beeq{eq1}
A = 
\begin{pmatrix} 
B     &  E \cr    E^T   &  C 
\end{pmatrix},\ \ \ 
M = 
\begin{pmatrix} 
M_B   &  M_E \cr  M_E^T & M_C 
\end{pmatrix}.
\eneq
The block-diagonal matrices $B$ and $M_B$ are of size $d\times d$, where $d=\sum_{i=1}^p d_i$, 
while $E$ and $M_E$ are of size $d\times s$.

\subsubsection{Invariant subspaces from a Schur complement viewpoint}

Domain decomposition eigenvalue solvers decompose the construction of the Rayleigh-Ritz 
projection subspace ${\cal Z}$ is formed by two separate parts. More specifically, ${\cal 
Z}$ can be written as
\begin{equation} \label{dd_rr}
{\cal Z} = {\cal U} \oplus {\cal Y},
\end{equation}
where ${\cal U}$ and ${\cal Y}$ are subspaces that are orthogonal to each other and approximate 
the part of the solution associated with the interior and interface variables, respectively.

Let the $i$th eigenvector of $(A,M)$ be partitioned as 
\begin{equation} \label{eig_part}
x^{(i)}=
 \begin{pmatrix}
   u^{(i)}                                     \\[0.3em]
   y^{(i)}                                     \\[0.3em]
  \end{pmatrix},\ i=1,\ldots,n,
\end{equation}
where $u^{(i)} \in \mathbb{R}^{d}$ and $y^{(i)}\in \mathbb{R}^s$ correspond to the eigenvector 
part associated with the interior and interface variables, respectively. We can then rewrite 
$Ax^{(i)}=\lambda_i Mx^{(i)}$ in the following block form
\begin{equation} \label{schur_eigvec1}
  \begin{pmatrix}
   B-\lambda_i M_B     &  E-\lambda_i M_E      \\[0.3em]
   E^T-\lambda_i M_E^T &  C-\lambda_i M_C      \\[0.3em]
  \end{pmatrix}
  \begin{pmatrix}
   u^{(i)}                                     \\[0.3em]
   y^{(i)}                                     \\[0.3em]
  \end{pmatrix}=0.
\end{equation}
Eliminating $u^{(i)}$ from the second equation in (\ref{schur_eigvec1}) leads to the 
following nonlinear eigenvalue problem of size $s\times s$:
\begin{equation} \label{schur_eigvec3}
 [C-\lambda_i M_C -(E-\lambda_i M_E)^T(B-\lambda_i M_B)^{-1}(E-\lambda_i M_E)]y^{(i)} = 0.
\end{equation}
Once $y^{(i)}$ is computed in the above equation, $u^{(i)}$ can be recovered by the following linear 
system solution
\begin{equation} \label{schur_eigvec2}
 (B-\lambda_i M_B)u^{(i)} = - (E-\lambda_i M_E)y^{(i)}.
\end{equation}

In practice, since matrices $B$ and $M_B$ in (\ref{dd1}) are block-diagonal, the $p$ sub-vectors 
$u_j^{(i)}\in \mathbb{R}^{d_j}$ of $u^{(i)}=[(u_1^{(i)})^T,\ldots,(u_p^{(i)})^T]^T$ can 
be computed in a decoupled fashion among the $p$ subdomains as
\begin{equation} \label{compute_u_parallel}
 (B_j-\lambda_i M_B^{(j)}) u_j^{(i)} = -(\hat{E}_j-\lambda_i \hat{M}_E^{(j)})y_j^{(i)},\  j=1,\ldots,p,
\end{equation}
where $y_j^{(i)} \in \mathbb{R}^{s_j}$ is the subvector of $y^{(i)}=[(y_1^{(i)})^T,\ldots,
(y_p^{(i)})^T]^T$ that corresponds to the $j$th subdomain. 

By (\ref{schur_eigvec3}) and (\ref{schur_eigvec2}) we see that the subspaces ${\cal U}$ and 
${\cal Y}$ in (\ref{dd_rr}) should ideally be chosen as
\small
 \begin{align}
  & {\cal Y}= {\rm span} \left\lbrace \left[y^{(1)},\ldots,y^{(nev)}\right] \right\rbrace,   \\
  & {\cal U}= {\rm span} \left\lbrace \left[(B-\lambda_1 M_B)^{-1} (E-\lambda_1 M_E)y^{(1)},\ldots,(B-\lambda_{nev} M_B)^{-1} (E-\lambda_{nev} M_E)y^{(nev)}\right] \right\rbrace.
 \end{align}
\normalsize
The following two sections propose efficient numerical schemes to approximate these two 
subspaces. 

\section{Approximation of $\operatorname{span}\{y^{(1)},\ldots,y^{(nev)}\}$} \label{section3}

In this section we propose a numerical scheme to approximate $\operatorname{span}
\{y^{(1)},\ldots,y^{(nev)}\}$.

\subsection{Rational filtering restricted to the interface region} \label{interf2}

Let us define the following matrices:
\begin{equation*}
\begin{aligned}
 & B_{\zeta_\ell}=B-\zeta_\ell M_B, & E_{\zeta_\ell}=E-\zeta_\ell M_E, & \ \ C_{\zeta_\ell}=C-\zeta_\ell M_C.
\end{aligned}
\end{equation*}
Then, each matrix $(A-\zeta_\ell M)^{-1}$ in (\ref{rhoAM}) can be expressed as
\begin{equation} \label{inv_dd2}
  (A-\zeta_\ell M)^{-1}=
  \begin{pmatrix}
   B_{\zeta_\ell}^{-1} + B_{\zeta_\ell}^{-1}E_{\zeta_\ell}S_{\zeta_\ell}^{-1}E_{\zeta_\ell}^HB_{\zeta_\ell}^{-1}  &   -B_{\zeta_\ell}^{-1}E_{\zeta_\ell}S_{\zeta_\ell}^{-1}    \\[0.3em]
   -S_{\zeta_\ell}^{-1}E_{\zeta_\ell}^HB_{\zeta_\ell}^{-1}                                                        &   S_{\zeta_\ell}^{-1}                                      \\[0.3em]
  \end{pmatrix},
\end{equation}
where
\begin{equation} S_{\zeta_\ell} = C_{\zeta_\ell} -E_{\zeta_\ell}^HB_{\zeta_\ell}^{-1}E_{\zeta_\ell}, 
\end{equation}
denotes the corresponding Schur complement matrix.

Substituting (\ref{inv_dd2}) into (\ref{rhoAM}) leads to 
\begin{align} \label{num_cont_big}
\rho(M^{-1}A) = 
 & 2\Re e \left \lbrace \sum_{\ell=1}^{N_c} \omega_{\ell}
\begin{bmatrix}
   \left[B_{\zeta_{\ell}}^{-1} + B_{\zeta_{\ell}}^{-1}E_{\zeta_{\ell}}S_{\zeta_\ell}^{-1}E_{\zeta_{\ell}}^{H}B_{\zeta_{\ell}}^{-1}\right] & -  B_{\zeta_{\ell}}^{-1}E_{\zeta_{\ell}}S_{\zeta_\ell}^{-1} \\[0.3em]
 - S_{\zeta_\ell}^{-1}E_{\zeta_{\ell}}^{H}B_{\zeta_{\ell}}^{-1}                                                     &  S_{\zeta_\ell}^{-1}                        \\[0.3em]
\end{bmatrix}\right \rbrace M.
\end{align}
On the other hand, we have for any $\zeta \notin \Lambda(A,M)$:
\begin{equation}(A-\zeta M)^{-1} = \sum_{i=1}^n \dfrac{x^{(i)} (x^{(i)})^T}{\lambda_i-\zeta}. \label{pole_eig}\end{equation}
The above equality yields another expression for $\rho(M^{-1}A)$:
\begin{align} \label{lala}
\rho(M^{-1}A) & = 
\sum_{i=1}^n \rho(\lambda_i) x^{(i)} (x^{(i)})^TM    \\ 
& = \sum_{i=1}^n \rho(\lambda_i)
\begin{bmatrix} \label{lala2}
   u^{(i)} (u^{(i)})^T  &  u^{(i)} (y^{(i)})^T       \\[0.3em] 
   y^{(i)} (u^{(i)})^T  &  y^{(i)} (y^{(i)})^T
\end{bmatrix}M.
\end{align}

Equating the (2,2) blocks of the right-hand sides in (\ref{num_cont_big}) and (\ref{lala2}), 
yields
\begin{equation} \label{finalizing}
 2\Re e \left \lbrace \sum_{\ell=1}^{N_c} \omega_{\ell}S_{\zeta_{\ell}}^{-1} \right \rbrace = \sum_{i=1}^n \rho(\lambda_i) y^{(i)} (y^{(i)})^T.
\end{equation}
Equation (\ref{finalizing}) provides a way to approximate $\operatorname{span}\{y^{(1)},\ldots,
y^{(nev)}\}$ through the information in $S_{\zeta_{\ell}}^{-1}$. The coefficient $\rho(\lambda_i)$ 
can be interpreted as the contribution of the direction $y^{(i)}$ in $2\Re e \left \lbrace 
\sum_{\ell=1}^{N_c} \omega_{\ell}S_{\zeta_{\ell}}^{-1} \right \rbrace$. In the ideal case where 
$\rho(\zeta)\equiv \pm I_{[\alpha,\beta]}(\zeta)$, we have $\sum_{i=1}^n \rho(\lambda_i) y^{(i)} 
(y^{(i)})^T = \pm \sum_{i=1}^{nev} y^{(i)} (y^{(i)})^T$. In practice, $\rho(\zeta)$ will only 
be an approximation to $\pm I_{[\alpha,\beta]}(\zeta)$, and since $\rho(\lambda_{1}),\ldots,\rho(\lambda_{nev})$ 
are all nonzero, the following relation holds:
\begin{equation} \label{spann}
{\rm span}\{y^{(1)},\ldots,y^{(nev)}\} \subseteq {\rm range} \left(\Re e \left \lbrace \sum_{\ell=1}^{N_c} \omega_{\ell}S_{\zeta_{\ell}}^{-1} \right \rbrace \right).
\end{equation}
The above relation suggests to compute an approximation to $\operatorname{span}\{y^{(1)},\ldots,
y^{(nev)}\}$ by capturing the range space of $ \Re e \left \lbrace \sum_{\ell=1}^{N_c} \omega_{\ell}
S_{\zeta_{\ell}}^{-1} \right \rbrace $.

\subsection{A Krylov-based approach} \label{practical_form}

To capture ${\rm range} \left(\Re e \left \lbrace \sum_{\ell=1}^{N_c} \omega_{\ell}S_{\zeta_{\ell}}^{-1} \right \rbrace \right)$ 
we consider the numerical scheme outlined in Algorithm \ref{alg:arnoldi}. In contrast with RF-KRYLOV, Algorithm \ref{alg:arnoldi} 
is based on the Lanczos process \cite{10.2307/2007563}. Variable $T_{\mu}$ denotes a $\mu \times \mu$ symmetric tridiagonal matrix 
with $\alpha_1,\ldots,\alpha_{\mu}$ as its diagonal entries, and $\beta_1,\ldots,\beta_{\mu-1}$ as its off-diagonal entries, 
respectively. Line 2 computes the ``filtered'' vector $w$ by applying $ \Re e \left \lbrace \sum_{\ell=1}^{N_c} \omega_{\ell} 
S_{\zeta_{\ell}}^{-1}\right\rbrace$ to $q^{(\mu)}$ by solving the $N_c$ linear systems associated with matrices $S_{\zeta_{\ell}},\ 
\ell=1,\ldots,N_c$. Lines 4-12 orthonormalize $w$ against vectors $q^{(1)},\ldots,q^{(\mu)}$ in order to generate the next vector 
$q^{(\mu+1)}$. Algorithm \ref{alg:arnoldi} terminates when the trace of the tridiagonal matrices $T_{\mu}$ and $T_{\mu-1}$ remains 
the same up to a certain tolerance. 

\vspace{0.1in}
\begin{minipage}{\dimexpr\linewidth-9\fboxsep-9\fboxrule\relax}
\vbox{
\begin{algorithm0}{Krylov restricted to the interface variables} \label{alg:arnoldi}
\betab
\>0. \> Start with $q^{(1)} \in \mathbb{R}^s,\ s.t.\ \|q^{(1)}\|_2 = 1,$ $q_0 := 0,$ $b_1 = 0$, ${\rm tol}\in \mathbb{R}$                                    \\
\>1. \> For $\mu=1,2,\ldots$                                                                                                                                 \\
\>2. \> \> Compute $w= \Re e \left\lbrace \sum_{\ell=1}^{N_c}\omega_{\ell} S_{\zeta_{\ell}}^{-1}q^{(\mu)} \right\rbrace - b_\mu q^{(\mu-1)}$                 \\
\>3. \> \> $a_\mu=w^T q^{(\mu)}$                                                                                                                             \\
\>4. \> \> For $\kappa=1,\ldots,\mu$                                                          \\
\>5. \> \> \> $w = w - q^{(\kappa)}(w^T q^{(\kappa)})$                                        \\
\>6. \> \> End                                                                                \\
\>7. \> \> $b_{\mu+1} := \|w\|_2$                                                                                                            \\
\>8. \> \> If $b_{\mu+1} = 0$												                     \\
\>9. \>\>\> generate a unit-norm $q^{(\mu+1)}$ orthogonal to $q^{(1)},\ldots, q^{(\mu)}$					             \\
\>10. \>\> Else																     \\
\>11. \> \> \> $q^{(\mu+1)} = w / b_{\mu+1}$  												     \\
\>12 \> \> EndIf 															     \\
\>13. \> \> If the sum of eigenvalue of $T_{\mu}$ remains unchanged (up to ${\rm tol}$)  					             \\
\>   \> \> during the last few iterations; BREAK; EndIf                                       \\
\>14.\> End                                                                                   \\
\>15.\> Return $Q_{\mu} = [q^{(1)},\ldots, q^{(\mu)}]$                                        \\
\entab
\end{algorithm0}
}
\end{minipage}

\vspace{0.1in}

Algorithm \ref{alg:arnoldi} and RF-KRYLOV share a few key differences. First, Algorithm
\ref{alg:arnoldi} restricts orthonormalization to vectors of length $s$ instead of $n$. 
In addition, Algorithm \ref{alg:arnoldi} only requires linear system solutions with 
$S_\zeta$ instead of $A-\zeta M$. As can be verified by (\ref{inv_dd2}), a computation 
of the form $(A-\zeta M)^{-1}v=w,\ \zeta \in \mathbb{C}$ requires -in addition to a linear 
system solution with matrix $S_{\zeta}$- two linear system solutions with $B_{\zeta}$ as
well as two Matrix-Vector multiplications with $E_{\zeta}$. Finally, in contrast to RF-KRYLOV 
which requires at least $nev$ iterations to compute any $nev$ eigenpairs of the pencil 
$(A,M)$, Algorithm \ref{alg:arnoldi} might terminate in fewer than $nev$ iterations. This 
possible ``early termination'' of Algorithm \ref{alg:arnoldi} is explained in more detail 
by Proposition \ref{lowrank}.

\begin{proposition} \label{lowrank}
The rank of the matrix $\Re e \left \lbrace \sum_{\ell=1}^{N_c} \omega_{\ell} S_{\zeta_{\ell}}^{-1} 
\right \rbrace$, 
\begin{equation} r(S)={\rm rank} \left(\Re e \left \lbrace \sum_{\ell=1}^{N_c} \omega_{\ell} S_{\zeta_{\ell}}^{-1} \right \rbrace \right),\end{equation}
satisfies the inequality 
\begin{equation} \label{rS}{\rm rank} \left(\left[y^{(1)},\ldots,y^{(nev)}\right]\right) \leq r(S) \leq s.\end{equation}
\end{proposition}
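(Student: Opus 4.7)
The upper bound is immediate because $\Re e\{\sum_{\ell} \omega_\ell S_{\zeta_\ell}^{-1}\}$ is an $s\times s$ matrix, so $r(S)\leq s$. All the content is in the lower bound, and my plan is to extract it from the identity (\ref{finalizing}) derived earlier in the section.

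The starting point is to rewrite (\ref{finalizing}) in matrix form as
\begin{equation*}
2\Re e\left\{\sum_{\ell=1}^{N_c}\omega_\ell S_{\zeta_\ell}^{-1}\right\} = YDY^T,\qquad Y=[y^{(1)},\ldots,y^{(n)}],\quad D=\operatorname{diag}(\rho(\lambda_1),\ldots,\rho(\lambda_n)).
\end{equation*}
Then the entire argument reduces to computing the range (equivalently, the rank) of the symmetric matrix $YDY^T$ and locating the vectors $y^{(1)},\ldots,y^{(nev)}$ inside it.

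Let $I=\{i:\rho(\lambda_i)\neq 0\}$ and write $Y_I$, $D_I$ for the restrictions of $Y$ and $D$ to the columns (respectively diagonal entries) indexed by $I$. Since zero entries of $D$ contribute nothing, $YDY^T=Y_I D_I Y_I^T$. Because $D_I$ is a diagonal matrix with nonzero entries, it is invertible, so $\operatorname{range}(D_I Y_I^T)=\operatorname{range}(Y_I^T)$. Then the standard identity $\operatorname{range}(AB)=A\cdot\operatorname{range}(B)$ gives
\begin{equation*}
\operatorname{range}(Y_I D_I Y_I^T) = Y_I\cdot\operatorname{range}(Y_I^T) = \operatorname{range}(Y_I Y_I^T) = \operatorname{range}(Y_I) = \operatorname{span}\{y^{(i)}:i\in I\},
\end{equation*}
where the next-to-last equality uses the decomposition $\mathbb{R}^{|I|}=\operatorname{range}(Y_I^T)\oplus\ker(Y_I)$.

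To close the argument, I would invoke the fact (already used in the paragraph preceding (\ref{spann})) that $\rho(\lambda_1),\ldots,\rho(\lambda_{nev})$ are all nonzero, so $\{1,\ldots,nev\}\subseteq I$. Hence $\operatorname{span}\{y^{(1)},\ldots,y^{(nev)}\}\subseteq\operatorname{span}\{y^{(i)}:i\in I\}=\operatorname{range}(\Re e\{\sum_\ell \omega_\ell S_{\zeta_\ell}^{-1}\})$, from which the rank inequality $\operatorname{rank}([y^{(1)},\ldots,y^{(nev)}])\leq r(S)$ follows by taking dimensions. The only subtlety worth being careful about is the rank identity for $Y_I D_I Y_I^T$: because $D$ can have entries of both signs (it is not PSD), one cannot use the usual Gram-matrix argument, but the invertibility of $D_I$ is exactly what makes the "$\operatorname{range}(AB)=A\cdot\operatorname{range}(B)$" chain go through cleanly.
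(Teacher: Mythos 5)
Your route is the same as the paper's: the upper bound from the size of the matrix, and the lower bound by using (\ref{finalizing}) to write $2\Re e \left\lbrace \sum_{\ell}\omega_{\ell}S_{\zeta_{\ell}}^{-1}\right\rbrace = \sum_i \rho(\lambda_i)y^{(i)}(y^{(i)})^T$, discarding the indices with $\rho(\lambda_i)=0$, and claiming that what remains, $Y_ID_IY_I^T$, has rank equal to $\operatorname{rank}(Y_I)$, which dominates $\operatorname{rank}([y^{(1)},\ldots,y^{(nev)}])$ because $\{1,\ldots,nev\}\subseteq I$. The paper asserts this rank identity without justification; you try to justify it, and that is exactly where your argument breaks. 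The step $\operatorname{range}(D_IY_I^T)=\operatorname{range}(Y_I^T)$ is false: invertibility of $D_I$ only gives $\operatorname{range}(D_IY_I^T)=D_I\operatorname{range}(Y_I^T)$, a subspace of the same \emph{dimension} but in general a different subspace, and after the subsequent application of $Y_I$ the dimensions need no longer agree. In fact the identity you are after, $\operatorname{rank}(Y_ID_IY_I^T)=\operatorname{rank}(Y_I)$ for invertible diagonal $D_I$, fails when $D_I$ is indefinite: take $Y_I=(1\;\;1)$ and $D_I=\operatorname{diag}(1,-1)$, so that $Y_ID_IY_I^T=0$ while $\operatorname{rank}(Y_I)=1$. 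So the subtlety you flagged in your last sentence is genuine, and the chain of range identities does not get around it.

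The standard repair is sign-definiteness of the nonzero coefficients: if all nonzero $\rho(\lambda_i)$ have the same sign (as holds for the midpoint-rule filter the paper actually uses, for which $\rho(\zeta)>0$ on the real axis), then $Y_ID_IY_I^T=\bigl(Y_I|D_I|^{1/2}\bigr)\bigl(Y_I|D_I|^{1/2}\bigr)^T$ up to an overall sign, and the Gram-matrix argument gives $\operatorname{rank}(Y_ID_IY_I^T)=\operatorname{rank}(Y_I|D_I|^{1/2})=\operatorname{rank}(Y_I)$, after which your conclusion follows. To be fair, the paper's own proof invokes the same unproved identity, so you have correctly isolated the crux of the proposition; but your proposed resolution of it does not work as written, and without a positivity (or genericity) hypothesis on the $\rho(\lambda_i)$ the lower bound itself can fail.
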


\begin{proof}
We first prove the upper bound of $r(S)$. Since $ \Re e \left \lbrace \sum_{\ell=1}^{N_c} 
\omega_{\ell}S_{\zeta_{\ell}}^{-1} \right \rbrace$ is of size $s\times s$, $r(S)$ can not 
exceed $s$. To get the lower bound, let $\rho(\lambda_i)=0,\ i=nev+\kappa,\ldots,n$, 
where $0\leq \kappa \leq n-nev$. We then have 
\[\Re e \left \lbrace \sum_{\ell=1}^{N_c} \omega_{\ell} S_{\zeta_{\ell}}^{-1} \right 
\rbrace = \sum_{i=1}^{nev+\kappa} \rho(\lambda_i) y^{(i)} (y^{(i)})^T,\] and ${\rm rank} 
\left( \Re e\left \lbrace \sum_{\ell=1}^{N_c} \omega_{\ell}S_{\zeta_{\ell}}^{-1} \right 
\rbrace\right)={\rm rank}\left(\left[y^{(1)},\ldots,y^{(nev+\kappa)} \right]\right)$. 
Since $\rho(\lambda_i)\neq 0,\ i=1,\ldots,nev$, we have
\[r(S) = {\rm rank} \left(\left[y^{(1)},\ldots,y^{(nev+\kappa)}\right]\right) \geq {\rm rank}\left(\left[y^{(1)},\ldots,y^{(nev)} \right]\right).\]
\end{proof} 

\begin{figure} 
\centering
\includegraphics[width=0.79\textwidth]{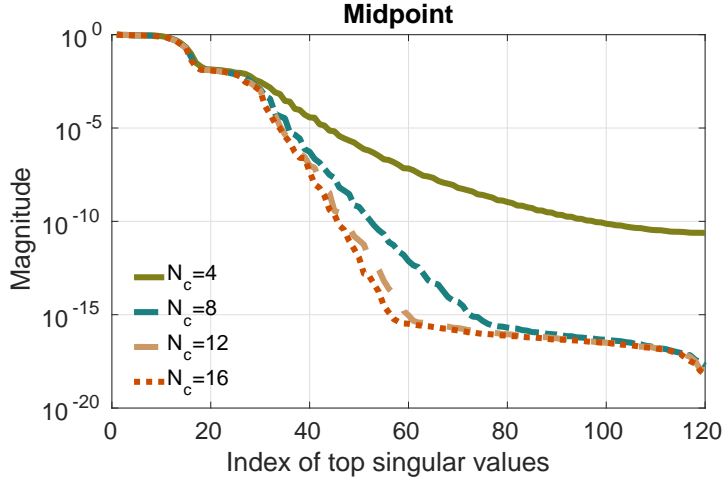}
\caption{The leading singular values of $\Re e \left \lbrace \sum_{\ell=1}^{N_c} 
\omega_{\ell} S_{\zeta_{\ell}}^{-1}\right \rbrace$ for different values of $N_c$. 
\label{fig:schur_svd}}
\end{figure}

By Proposition \ref{lowrank}, Algorithm \ref{alg:arnoldi} will perform at most $r(S)$ iterations, 
and $r(S)$ can be as small as ${\rm rank}\left(\left[y^{(1)},\ldots,y^{(nev)} \right]\right)$. 
We quantify this with a short example for a 2D Laplacian matrix generated by a Finite Difference
discretization with Dirichlet boundary conditions (for more details on this matrix see entry
``FDmesh1'' in Table \ref{testmat}) where we 
set $[\alpha,\beta]=[\lambda_1,\lambda_{100}]$ (thus $nev=100$). After computing vectors $y^{(1)},\ldots,y^{(nev)}$ 
explicitly, we found that ${\rm rank} \left(\left [y^{(1)},\ldots,y^{(nev)}\right] \right)=48$. 
Figure \ref{fig:schur_svd} plots the 120 (after normalization) leading singular values of matrix 
$\Re e \left \lbrace\sum_{\ell=1}^{N_c} \omega_{\ell} S_{\zeta_{\ell}}^{-1}\right \rbrace$. As $N_c$ 
increases, the trailing $s-{\rm rank} \left(\left [y^{(1)},\ldots,y^{(nev)}\right] \right)$ singular 
values approach zero. Moreover, even for those singular values which are not zero, their magnitude 
might be small, in which case Algorithm \ref{alg:arnoldi} might still converge in fewer than $r(S)$ 
iterations. Indeed, when $N_c=16$, Algorithm \ref{alg:arnoldi} terminates after exactly $36$ 
iterations which is lower than $r(S)$ and only one third of the minimum number of iterations 
required by RF-KRYLOV for any value of $N_c$. As a sidenote, when $N_c=2$, Algorithm \ref{alg:arnoldi} 
terminates after $70$ iterations.


\section{Approximation of $\operatorname{span}\{u^{(1)},\ldots,u^{(nev)}\}$} \label{interior_part}

Recall the partitioning of eigenvector $x^{(i)}$ in (\ref{eig_part}) and assume that its interface 
part $y^{(i)}$ is already computed. A straightforward approach to recover $u^{(i)}$ is then to solve
the linear system in $(\ref{schur_eigvec2})$. However, this entails two drawbacks. First, solving 
the linear systems with each different $B-\lambda_iM_B$ for all $\lambda_i,\ i=1,\ldots,nev$ might 
become prohibitively expensive when $nev \gg 1$. More importantly, Algorithm \ref{alg:arnoldi} only 
returns an approximation to ${\rm span}\{y^{(1)},\ldots,y^{(nev)}\}$, rather than the individual 
vectors $y^{(1)},\ldots,y^{(nev)}$, or the eigenvalues $\lambda_1,\ldots,\lambda_{nev}$. 

In this section we alternatives for the approximation of ${\rm span}\left \lbrace u^{(1)},\ldots,
u^{(nev)} \right \rbrace$. Since the following discussion applies to all $n$ eigenpairs 
of $(A,M)$, we will drop the superscripts in $u^{(i)}$ and $y^{(i)}$.

\subsection{The basic approximation}

To avoid solving the $nev$ different linear systems in $(\ref{schur_eigvec2})$ we consider the same 
real scalar $\sigma$ for all $nev$ sought eigenpairs. The part of each sought eigenvector $x$ 
corresponding to the interior variables, $u$, can then be approximated by 
\begin{equation} \hat{u} = -B_{\sigma}^{-1}E_{\sigma}y. \label{prol2} \end{equation} 
In the following proposition, we analyze the difference between $u$ and its approximation $\hat{u}$ 
obtained by (\ref{prol2}). 
\begin{lemma} \label{prop:error}
Suppose $u$ and $\hat{u}$ are computed as in $(\ref{schur_eigvec2})$ and $(\ref{prol2})$, respectively. 

Then:
\begin{equation}
\label{eq:erruhat}
u-\hat{u} = -[B_{\lambda}^{-1}-B_{\sigma}^{-1}]E_{\sigma} y + (\lambda-\sigma)B_{\lambda}^{-1} M_Ey.
\end{equation} 
\end{lemma}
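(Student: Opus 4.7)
The proof is essentially a one-line manipulation, so my plan is to identify the cleanest algebraic path and describe it rather than execute intricate steps. The starting points are the two defining relations: from (\ref{schur_eigvec2}) we have $u = -B_{\lambda}^{-1} E_{\lambda} y$, and from (\ref{prol2}) we have $\hat{u} = -B_{\sigma}^{-1} E_{\sigma} y$. So the difference is
\[
u-\hat u \;=\; -B_{\lambda}^{-1} E_{\lambda} y + B_{\sigma}^{-1} E_{\sigma} y,
\]
and the goal is to reorganize the right-hand side into the two contributions stated in \eqref{eq:erruhat}, namely one part measuring the change in the inverse $B_{\lambda}^{-1}\!-\!B_{\sigma}^{-1}$ applied to the common vector $E_{\sigma}y$, and one part measuring the change in the right-hand side $E_{\lambda}\!-\!E_{\sigma}$ premultiplied by $B_{\lambda}^{-1}$.

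The standard trick I would use is the add-and-subtract of $B_{\lambda}^{-1} E_{\sigma} y$:
\[
u-\hat u \;=\; -B_{\lambda}^{-1}\bigl(E_{\lambda}-E_{\sigma}\bigr) y \;-\; \bigl(B_{\lambda}^{-1}-B_{\sigma}^{-1}\bigr) E_{\sigma} y.
\]
Then I would invoke the elementary identity
\[
E_{\lambda} - E_{\sigma} \;=\; (E-\lambda M_E) - (E-\sigma M_E) \;=\; -(\lambda-\sigma) M_E,
\]
which, when substituted above, turns the first term into $(\lambda-\sigma)B_{\lambda}^{-1} M_E y$ and gives exactly \eqref{eq:erruhat}.

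There is no real obstacle here: the only subtlety is choosing the right ``pivot'' vector in the splitting, and $B_{\lambda}^{-1} E_{\sigma} y$ is the natural one because it keeps $E_{\sigma}$ paired with the inverse-difference (matching how $\hat u$ is actually computed in practice) and isolates the $M_E$ contribution on the $B_{\lambda}^{-1}$ side (matching how $u$ depends on $\lambda$). An equally valid alternative would be to pivot through $B_{\sigma}^{-1} E_{\lambda} y$, which produces the same identity with the roles of the two terms swapped; I would stick with the first choice since it matches the form written in \eqref{eq:erruhat}.
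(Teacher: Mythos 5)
Your proof is correct and follows essentially the same route as the paper: the paper simply writes $E_{\lambda}=E_{\sigma}-(\lambda-\sigma)M_E$ inside $u=-B_{\lambda}^{-1}E_{\lambda}y$ and then subtracts $\hat{u}=-B_{\sigma}^{-1}E_{\sigma}y$, which is the same add-and-subtract of the pivot $B_{\lambda}^{-1}E_{\sigma}y$ that you perform. No gaps.
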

\begin{proof}
We can write $u$ as
\begin{equation} \label{carousel1}
\begin{aligned}
 u & =-B_{\lambda}^{-1} E_{\lambda}y                                                 \\
   & =-B_{\lambda}^{-1}(E_{\sigma}-(\lambda-\sigma)M_E)y                             \\
   & =-B_{\lambda}^{-1}E_{\sigma}y + (\lambda-\sigma)B_{\lambda}^{-1}M_Ey.
\end{aligned}
\end{equation}
The result in (\ref{eq:erruhat}) follows by combining (\ref{prol2}) and (\ref{carousel1}).
\end{proof}

We are now ready to compute an upper bound of $u-\hat{u}$ measured in the $M_B$-norm.\footnote{We define the $X$-norm of any nonzero vector $y$ and SPD matrix $X$ as $||y||_{X}=\sqrt{y^TXy}$.}
\begin{theorem}
\label{thm:uuhat}
Let the eigendecomposition of $(B,M_B)$ be written as
\begin{equation}
BV = M_BVD,
\label{eq:eigbm}
\end{equation}
where $D = {\rm diag}(\delta_1,
\ldots,\delta_d)$ and $V=[v^{(1)},\ldots,v^{(d)}]$. If $\hat{u}$ is defined as in (\ref{prol2}) and 
$(\delta_{\ell},v^{(\ell)}),\ \ell=1,\ldots,d$ denote the eigenpairs of $(B,M_B)$ with $(v^{(\ell)})^T
M_Bv^{(\ell)}=1$, then
\begin{equation}
 \left\Vert u-\hat{u}\right\Vert_{M_B} \leq \max_{\ell} \frac{|\lambda-\sigma|}{|(\lambda-\delta_\ell)(\sigma-\delta_\ell)|} ||E_{\sigma}y||_{M^{-1}_B} + 
 \max_{\ell} \frac{|\lambda-\sigma|}{|\lambda-\delta_\ell|} ||M_{E}y||_{M^{-1}_B},
\end{equation}
\end{theorem}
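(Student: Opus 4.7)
The plan is to start from the explicit error formula supplied by Lemma \ref{prop:error} and then diagonalize the pencil $(B,M_B)$ so that the two remaining resolvents become diagonal. The first step is to rewrite the bracketed difference by the standard resolvent identity $B_\lambda^{-1}-B_\sigma^{-1}=B_\lambda^{-1}(B_\sigma-B_\lambda)B_\sigma^{-1}=(\lambda-\sigma)B_\lambda^{-1}M_BB_\sigma^{-1}$, so that Lemma \ref{prop:error} becomes
\begin{equation*}
u-\hat u = -(\lambda-\sigma)B_\lambda^{-1}M_BB_\sigma^{-1}E_\sigma y + (\lambda-\sigma)B_\lambda^{-1}M_E y.
\end{equation*}
In this form both terms carry a common factor $(\lambda-\sigma)$ and each contains only resolvents of $(B,M_B)$, which is exactly what one needs to apply the spectral decomposition.

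Next I would invoke the $M_B$-orthonormal eigendecomposition in \eqref{eq:eigbm}: since $V^TM_BV=I$ and $V^TBV=D$, one has $VV^T=M_B^{-1}$ and
\begin{equation*}
B_\zeta^{-1} = V(D-\zeta I)^{-1}V^T \quad \text{for } \zeta\notin\Lambda(B,M_B).
\end{equation*}
Substituting into the expression for $u-\hat u$, the middle factor $V^TM_BV=I$ collapses in the first term, leaving
\begin{equation*}
u-\hat u = -(\lambda-\sigma)V(D-\lambda I)^{-1}(D-\sigma I)^{-1}V^TE_\sigma y + (\lambda-\sigma)V(D-\lambda I)^{-1}V^TM_E y,
\end{equation*}
so both terms have the form $Vz$ for some coefficient vector $z$.

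Now I would pass to the $M_B$-norm. The identity $V^TM_BV=I$ gives $\|Vz\|_{M_B}=\|z\|_2$, which is the key simplification: the $M_B$-norm of each summand equals the Euclidean norm of a diagonal-matrix action. After applying the triangle inequality, each summand can be estimated by pulling out the $\ell_\infty$-norm of the diagonal, yielding the factors $\max_\ell|\lambda-\sigma|/|(\lambda-\delta_\ell)(\sigma-\delta_\ell)|$ and $\max_\ell|\lambda-\sigma|/|\lambda-\delta_\ell|$, respectively. It then only remains to recognise the residual factors: using $VV^T=M_B^{-1}$ one has $\|V^Tw\|_2^2 = w^TVV^Tw = w^TM_B^{-1}w = \|w\|_{M_B^{-1}}^2$, so that $\|V^TE_\sigma y\|_2 = \|E_\sigma y\|_{M_B^{-1}}$ and $\|V^TM_E y\|_2 = \|M_E y\|_{M_B^{-1}}$, which delivers the stated bound.

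The only real subtlety I anticipate is bookkeeping of the transpose/inverse relationships between $V$, $V^T$, $M_B$ and $M_B^{-1}$ arising from the generalized eigendecomposition; in particular the identity $VV^T = M_B^{-1}$ (as opposed to $V^TV = I$) is what makes the Euclidean norm of $V^T(\cdot)$ coincide with the $M_B^{-1}$-norm, and this is what ties the final inequality together cleanly. Otherwise the argument is a direct chain of resolvent identity, diagonalization, triangle inequality, and spectral $\ell_\infty$ bound.
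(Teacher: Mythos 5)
Your proposal is correct and follows essentially the same route as the paper: both start from Lemma \ref{prop:error}, diagonalize $B_\lambda^{-1}$ and $B_\sigma^{-1}$ in the $M_B$-orthonormal eigenbasis of $(B,M_B)$, apply the triangle inequality together with a sup-norm bound on the diagonal factors, and identify the residual norms via $\|V^Tw\|_2=\|w\|_{M_B^{-1}}$ (the paper phrases this through the expansion coefficients $\epsilon_\ell,\gamma_\ell$ of $E_\sigma y$ and $M_Ey$ in the basis $M_Bv^{(\ell)}$, while you phrase it in matrix form after first collapsing $B_\lambda^{-1}-B_\sigma^{-1}$ with the resolvent identity, but these are the same computation). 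No gaps.
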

\begin{proof}
Since $M_B$ is SPD, vectors $E_{\sigma}y$ and $M_Ey$ in (\ref{carousel1}) can be expanded in the basis $M_Bv^{(\ell)}$ as:
\begin{equation}
 E_{\sigma}y = M_B \sum_{\ell=1}^{\ell=d} \epsilon_{\ell} v^{(\ell)},\ \ \ M_Ey = M_B \sum_{\ell=1}^{\ell=d} \gamma_{\ell} v^{(\ell)}, \label{carousel2}
\end{equation}
where $\epsilon_{\ell},\ \gamma_{\ell} \in \mathbb{R}$ are the expansion coefficients. Based on (\ref{eq:eigbm}) 
and noting that $V^TM_BV=I$, shows that
\begin{equation}
B_{\sigma}^{-1} =V(D-\sigma I)^{-1}V^{T}, \ \ \ B_{\lambda}^{-1}=V(D-\lambda I)^{-1}V^{T}. \label{eq:bb}
\end{equation} 
Substituting $(\ref{carousel2})$ and $(\ref{eq:bb})$ into the right-hand side of (\ref{eq:erruhat}) gives
\begin{align*}
u-\hat{u} = & -V\left[(D-\lambda I)^{-1}-(D-\sigma I)^{-1}\right]V^{T}\left(M_B \sum_{\ell=1}^{\ell=d} 
\epsilon_{\ell} v^{(\ell)}\right) \\
& + (\lambda-\sigma)V(D-\lambda I)^{-1}V^{T} \left(M_B\sum_{\ell=1}^{\ell=d} \gamma_{\ell} v^{(\ell)}\right) \\
 = & -\sum_{\ell=1}^{\ell=d} \frac{\epsilon_{\ell}(\lambda-\sigma)}
 {(\delta_\ell-\lambda)(\delta_\ell-\sigma)} v^{(\ell)} + \sum_{\ell=1}^{\ell=d} \frac{\gamma_{\ell}(\lambda-\sigma)}{\delta_\ell-\lambda} v^{(\ell)}.
\end{align*}
Now, taking the $M_B$-norm of the above equation, we finally obtain
\begin{align*}
||u-\hat{u}||_{M_B} & \leq \left\Vert \sum_{\ell=1}^{\ell=d} \frac{\epsilon_{\ell}(\lambda-\sigma)}{(\delta_\ell-\lambda)(\delta_\ell-\sigma)} v^{(\ell)}\right\Vert_{M_B} 
                           + \left\Vert \sum_{\ell=1}^{\ell=d} \frac{\gamma_{\ell}(\lambda-\sigma)}{\delta_\ell-\lambda} v^{(\ell)}\right\Vert_{M_B}                 \\
                    & = \left\Vert \sum_{\ell=1}^{\ell=d} \left|\frac{(\lambda-\sigma)}{(\delta_\ell-\lambda)(\delta_\ell-\sigma)}\right| \epsilon_{\ell}v^{(\ell)}\right\Vert_{M_B} 
                           + \left\Vert \sum_{\ell=1}^{\ell=d} \left|\frac{(\lambda-\sigma)}{\delta_\ell-\lambda}\right| \gamma_{\ell}v^{(\ell)}\right\Vert_{M_B}                 \\
                    & \leq \max_{\ell} \frac{|\lambda-\sigma|}{|(\lambda-\delta_\ell)(\sigma-\delta_\ell)|} \left\Vert \sum_{\ell=1}^{\ell=d} \epsilon_\ell v^{(\ell)} \right\Vert_{M_B} 
                           + \max_{\ell} \frac{|\lambda-\sigma|}{|\lambda-\delta_\ell|} \left\Vert \sum_{\ell=1}^{\ell=d}  \gamma_\ell v^{(\ell)}\right\Vert_{M_B}   \\
 &= \max_{\ell} \frac{|\lambda-\sigma|}{|(\lambda-\delta_\ell)(\sigma-\delta_\ell)|} \left\Vert M^{-1}_{B}E_{\sigma}y\right\Vert_{M_B} + 
 \max_{\ell} \frac{|\lambda-\sigma|}{|\lambda-\delta_\ell|} \left\Vert M^{-1}_{B}M_{E}y\right\Vert_{M_B}\\
 &= \max_{\ell} \frac{|\lambda-\sigma|}{|(\lambda-\delta_\ell)(\sigma-\delta_\ell)|} \left\Vert E_{\sigma}y\right\Vert_{M^{-1}_B} + 
 \max_{\ell} \frac{|\lambda-\sigma|}{|\lambda-\delta_\ell|} \left\Vert M_{E}y\right\Vert_{M^{-1}_B}.
\end{align*}
\end{proof}

Theorem \ref{thm:uuhat} indicates that the upper bound of $||u-\hat{u}||_{M_B}$ depends on the 
distance between $\sigma$ and $\lambda$, as well as the distance of these values from the 
eigenvalues of $(B,M_B)$. This upper bound becomes relatively large when $\lambda$ is located 
far from $\sigma$, while, on the other hand, becomes small when $\lambda$ and $\sigma$ lie 
close to each other, and far from the eigenvalues of $(B,M_B)$. 

\subsection{Enhancing accuracy by resolvent expansions} \label{inter_resol}

Consider the resolvent expansion of $B_{\lambda}^{-1}$ around $\sigma$:
\begin{equation}
B_{\lambda}^{-1}=B_{\sigma}^{-1} \sum_{\theta=0}^{\infty} \left[(\lambda-\sigma)M_BB_{\sigma}^{-1}\right]^{\theta}. \label{eq:resolventB}
\end{equation}
By (\ref{eq:erruhat}), the error $u-\hat{u}$ consists of two components: $i$) $(B_{\lambda}^{-1}-B_{\sigma}^{-1})E_{\sigma} y$; 
and $ii$) $(\lambda-\sigma) B_{\lambda}^{-1} M_Ey$. An immediate improvement is then to approximate $B_{\lambda}^{-1}$ by also 
considering higher-order terms in (\ref{eq:resolventB}) instead of $B_{\sigma}^{-1}$ only. Furthermore, the same idea can be 
repeated for the second error component. Thus, we can extract $\hat{u}$ by a projection step from the following subspace
\begin{equation}
\hat{u} \in \{B^{-1}_{\sigma}E_{\sigma}y,\ldots,B^{-1}_{\sigma}\left(M_B B_{\sigma}^{-1}\right)^{\psi-1} E_{\sigma}y,B^{-1}_{\sigma}M_Ey,\ldots,B^{-1}_{\sigma}\left(M_B B_{\sigma}^{-1}\right)^{\psi-1} M_Ey\}. \label{yet1}
\end{equation}
The following theorem refines the upper bound of $\|u-\hat{u}\|_{M_B}$ when $\hat{u}$ is approximated by the subspace in (\ref{yet1})
and $\psi\geq 1$ resolvent expansion terms are retained in (\ref{eq:resolventB}).

\begin{theorem}
\label{thm:uuhat2}
 Let ${\cal U}={\rm span}\left\lbrace U_1, U_2\right\rbrace$ where
 \begin{align}
  & U_1 = \left[B_{\sigma}^{-1} E_{\sigma}y,\ldots, B_{\sigma}^{-1} \left(M_B B_{\sigma}^{-1}\right)^{\psi-1} E_{\sigma}y\right], \\
  & U_2 = \left[B_{\sigma}^{-1} M_Ey,\ldots,B_{\sigma}^{-1}\left(M_B B_{\sigma}^{-1}\right)^{\psi-1} M_Ey\right].
  \end{align}
 If $\hat{u}:= \argmin_{g \in {\cal U}} \|u-g\|_{M_B}$, and $(\delta_{\ell},v^{(\ell)}),\ \ell=1,\ldots,d$
 denote the eigenpairs of $(B,M_B)$, then:
 \begin{equation}
 \|u-\hat{u}\|_{M_B} \leq \max_{\ell} \frac{|\lambda-\sigma|^{\psi}||E_{\sigma}y||_{M^{-1}_B}}{|(\lambda-\delta_\ell)(\sigma-\delta_\ell)^{\psi}|}   + \max_{\ell} \frac{|\lambda-\sigma|^{\psi+1}|| 
M_{E}y||_{M^{-1}_B}}{|(\lambda-\delta_\ell)(\sigma-\delta_\ell)^{\psi}|}. \label{carousel9}
 \end{equation}
\end{theorem}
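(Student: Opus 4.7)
The plan is to exploit the fact that $\hat{u}$ is the $M_B$-orthogonal best approximation of $u$ from $\mathcal{U}$, so it suffices to exhibit any particular $\tilde{u} \in \mathcal{U}$ and obtain the bound (\ref{carousel9}) for $\|u-\tilde u\|_{M_B}$; the minimality property then automatically yields $\|u-\hat{u}\|_{M_B}\leq \|u-\tilde{u}\|_{M_B}$.

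The natural candidate is obtained by truncating the resolvent expansion (\ref{eq:resolventB}) at order $\psi$ and plugging it into the exact representation $u=-B_\lambda^{-1}E_\sigma y+(\lambda-\sigma)B_\lambda^{-1}M_Ey$ from (\ref{carousel1}). Specifically, define
\begin{equation*}
\tilde{u} := -\sum_{\theta=0}^{\psi-1}(\lambda-\sigma)^\theta B_\sigma^{-1}(M_B B_\sigma^{-1})^\theta E_\sigma y + (\lambda-\sigma)\sum_{\theta=0}^{\psi-1}(\lambda-\sigma)^\theta B_\sigma^{-1}(M_B B_\sigma^{-1})^\theta M_E y,
\end{equation*}
which by construction belongs to $\mathcal{U}=\operatorname{span}\{U_1,U_2\}$. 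A short induction based on the first-order resolvent identity $B_\lambda^{-1}-B_\sigma^{-1}=(\lambda-\sigma)B_\lambda^{-1}M_B B_\sigma^{-1}$ gives the closed-form remainder
\begin{equation*}
B_\lambda^{-1} - \sum_{\theta=0}^{\psi-1}(\lambda-\sigma)^\theta B_\sigma^{-1}(M_B B_\sigma^{-1})^\theta = (\lambda-\sigma)^\psi B_\lambda^{-1}(M_B B_\sigma^{-1})^\psi,
\end{equation*}
so that
\begin{equation*}
u-\tilde{u} = -(\lambda-\sigma)^\psi B_\lambda^{-1}(M_B B_\sigma^{-1})^\psi E_\sigma y + (\lambda-\sigma)^{\psi+1} B_\lambda^{-1}(M_B B_\sigma^{-1})^\psi M_E y.
\end{equation*}

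Next, I would diagonalize in the $(B,M_B)$ eigenbasis exactly as in the proof of Theorem \ref{thm:uuhat}: using $V^TM_BV=I$ (hence $M_B=V^{-T}V^{-1}$), one gets $M_B B_\sigma^{-1}=V^{-T}(D-\sigma I)^{-1}V^T$ and therefore $B_\lambda^{-1}(M_B B_\sigma^{-1})^\psi = V(D-\lambda I)^{-1}(D-\sigma I)^{-\psi}V^T$. Expanding $E_\sigma y=M_B\sum_\ell \epsilon_\ell v^{(\ell)}$ and $M_E y=M_B\sum_\ell\gamma_\ell v^{(\ell)}$ as in (\ref{carousel2}) reduces the remainder to
\begin{equation*}
u-\tilde{u} = -\sum_{\ell=1}^{d}\frac{(\lambda-\sigma)^\psi \epsilon_\ell}{(\delta_\ell-\lambda)(\delta_\ell-\sigma)^\psi}v^{(\ell)} + \sum_{\ell=1}^{d}\frac{(\lambda-\sigma)^{\psi+1}\gamma_\ell}{(\delta_\ell-\lambda)(\delta_\ell-\sigma)^\psi}v^{(\ell)}.
\end{equation*}

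To finish, I take $M_B$-norms and use the triangle inequality. Because $\{v^{(\ell)}\}$ is $M_B$-orthonormal, each term simplifies to a weighted $\ell_2$ norm of the coefficients; pulling out the maximum of the rational factors and identifying $\sum_\ell\epsilon_\ell^2 = \|E_\sigma y\|_{M_B^{-1}}^2$ and $\sum_\ell\gamma_\ell^2=\|M_E y\|_{M_B^{-1}}^2$ (shown by $(E_\sigma y)^T M_B^{-1}(E_\sigma y) = \vec\epsilon^{\,T}V^TM_B V\vec\epsilon=\vec\epsilon^{\,T}\vec\epsilon$, and similarly for $M_E y$) produces exactly the bound (\ref{carousel9}). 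The only non-routine step is verifying the closed-form tail of the truncated resolvent series; once that identity is in hand, the rest is a mechanical repetition of the argument in Theorem \ref{thm:uuhat}, so I would expect the write-up to be short.
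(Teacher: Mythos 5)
Your proposal is correct and follows essentially the same route as the paper: you pick the same candidate $g=U_1{\bf c}_1+U_2{\bf c}_2$ given by the truncated resolvent expansion, reduce the remainder to $(\lambda-\sigma)^{\psi}V(D-\lambda I)^{-1}(D-\sigma I)^{-\psi}V^{T}$ acting on $E_\sigma y$ and $M_Ey$, and finish with the $M_B$-orthonormality of the $v^{(\ell)}$ plus minimality of $\hat u$. The only cosmetic difference is that you establish the tail identity by induction on the first-order resolvent identity before diagonalizing, whereas the paper verifies it directly in the eigenbasis.
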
 
\begin{proof}
 Define a vector $g:=U_1{\bf c}_1+U_2{\bf c}_2$ where 
 \begin{equation}
  {\bf c}_1 = -\left[1,\lambda-\sigma,\ldots,(\lambda-\sigma)^{\psi-1}\right]^T,\ \ \ {\bf c}_2 = \left[\lambda-\sigma,\ldots,(\lambda-\sigma)^{\psi}\right]^T.
  \label{eq:c}
 \end{equation}
 If we equate terms, the difference between $u$ and $g$ satisfies
 \begin{align} \label{lala20}
  u-g  = & - \left[B_{\lambda}^{-1}-B_{\sigma}^{-1}\sum_{\theta=0}^{\psi-1}\left[(\lambda-\sigma)M_B B_{\sigma}^{-1}\right]^{\theta}\right]E_{\sigma} y \\
               & + (\lambda-\sigma)\left[B_{\lambda}^{-1}-B_{\sigma}^{-1}\sum_{\theta=0}^{\psi-1}\left[(\lambda-\sigma)M_B B_{\sigma}^{-1}\right]^{\theta}\right] M_Ey.\nonumber
 \end{align}
Expanding $B_{\sigma}^{-1}$ and $B_{\lambda}^{-1}$ in the eigenbasis of $(B,M_B)$ gives
\begin{align}
B_{\lambda}^{-1}-B_{\sigma}^{-1}\sum_{\theta=0}^{\psi-1}\left[(\lambda-\sigma)M_B B_{\sigma}^{-1}\right]^{\theta} & =  (\lambda-\sigma)^{\psi}\left[V(D-\lambda I)^{-1}(D-\sigma I)^{-\psi}V^{T}\right],
\end{align}
and thus (\ref{lala20}) can be simplified as
\begin{align*}
u-g  =& -(\lambda-\sigma)^{\psi}V(D-\lambda I)^{-1}(D-\sigma I)^{-\psi}V^{T}E_{\sigma}y \\
& \ + (\lambda-\sigma)^{\psi+1}V(D-\lambda I)^{-1}(D-\sigma I)^{-\psi}V^{T}M_{E}y.
\end{align*}
Plugging in the expansion of $E_\sigma y$ and $M_Ey$ defined in (\ref{carousel2}) finally leads to
 \begin{equation}
  u-g  = \sum_{\ell=1}^{\ell=d} \dfrac{-\epsilon_{\ell}(\lambda-\sigma)^{\psi}}{(\delta_{\ell}-\lambda)(\delta_{\ell}-\sigma)^{\psi}} v^{(\ell)} + \sum_{\ell=1}^{\ell=d} \dfrac{\gamma_{\ell}(\lambda-\sigma)^{\psi+1}}{(\delta_{\ell}-\lambda)(\delta_{\ell}-\sigma)^{\psi}} v^{(\ell)}.
  \label{eq:ug}
 \end{equation}
Considering the $M_B$-norm gives 
 \begin{align*}\label{carousel19}
\|u-g\|_{M_B} &\leq \left\Vert \sum_{\ell=1}^{\ell=d} \dfrac{-\epsilon_{\ell}(\lambda-\sigma)^{\psi}}{(\delta_{\ell}-\lambda)(\delta_{\ell}-\sigma)^{\psi}} v^{(\ell)}\right\Vert_{M_B} + \left\Vert \sum_{\ell=1}^{\ell=d} \dfrac{\gamma_{\ell}(\lambda-\sigma)^{\psi+1}}{(\delta_{\ell}-\lambda)(\delta_{\ell}-\sigma)^{\psi}} v^{(\ell)}\right\Vert_{M_B}\\
&\leq \max_{\ell} \frac{|\lambda-\sigma|^{\psi}}{|(\lambda-\delta_\ell)(\sigma-\delta_\ell)^{\psi}|} \left\Vert \sum_{\ell=1}^{\ell=d} \epsilon_\ell v^{(\ell)} \right\Vert_{M_B}\\ 
& \quad + \max_{\ell} \frac{|\lambda-\sigma|^{\psi+1}}{|(\lambda-\delta_\ell)(\sigma-\delta_\ell)^{\psi}|} \left\Vert \sum_{\ell=1}^{\ell=d} \gamma_\ell v^{(\ell)}\right\Vert_{M_B}\\
& = \max_{\ell} \frac{|\lambda-\sigma|^{\psi}||E_{\sigma}y||_{M^{-1}_B}}{|(\lambda-\delta_\ell)(\sigma-\delta_\ell)^{\psi}|} + \max_{\ell} \frac{|\lambda-\sigma|^{\psi+1}|| 
M_{E}y||_{M^{-1}_B}}{|(\lambda-\delta_\ell)(\sigma-\delta_\ell)^{\psi}|}.
 \end{align*}
Since $\hat{u}$ is the solution of $\min_{g \in {\cal U}} \|u-g\|_{M_B}$, it follows that $\|u-\hat{u}\|_{M_B} \leq \|u-g\|_{M_B}$.
\end{proof}

A comparison of the bound in Theorem \ref{thm:uuhat2} with the bound in Theorem \ref{thm:uuhat} indicates 
that one may expect an improved approximation when $\sigma$ is close to $\lambda$. Numerical examples in 
Section \ref{numerical_experiments} will verify that this approach enhances accuracy even when $|\sigma-\lambda|$ 
is not very small.

\subsection{Enhancing accuracy by deflation} \label{inter_defla}

Both Theorem \ref{thm:uuhat} and Theorem \ref{thm:uuhat2} imply that the approximation error 
$u-\hat{u}$ might have its largest components along those eigenvector directions associated 
with the eigenvalues of $(B,M_B)$ located the closest to $\sigma$. We can remove these directions 
by augmenting the projection subspace with the corresponding eigenvectors of $(B,M_B)$.

\begin{theorem}  \label{thm:all}
Let $\delta_1,\delta_2,\ldots,\delta_{\kappa}$ be the $\kappa$ eigenvalues of $(B,M_B)$ that 
lie the closest to $\sigma$, and let $v^{(1)},v^{(2)},\ldots,v^{(\kappa)}$ denote the 
corresponding eigenvectors. Moreover, let ${\cal U}={\rm span}\left\lbrace U_1,U_2,U_3\right
\rbrace$ where
 \begin{align}
  & U_1 = \left[B_{\sigma}^{-1} E_{\sigma}y,\ldots, B_{\sigma}^{-1} \left(M_B B_{\sigma}^{-1}\right)^{\psi-1} E_{\sigma}y\right], \\
  & U_2 = \left[B_{\sigma}^{-1} M_Ey,\ldots,B_{\sigma}^{-1}\left(M_B B_{\sigma}^{-1}\right)^{\psi-1} M_Ey\right],\\ 
  & U_3 = \left[v^{(1)},v^{(2)},\ldots,v^{(\kappa)} \right].
  \end{align}
 If $\hat{u}:= \argmin_{g \in {\cal U}} \|u-g\|_{M_B}$and $(\delta_{\ell},v^{(\ell)}),\ \ell=1,\ldots,d$
 denote the eigenpairs of $(B,M_B)$, then:
 \begin{equation}
 \|u-\hat{u}\|_{M_B} \leq  \max_{\ell>\kappa} \frac{|\lambda-\sigma|^{\psi}||E_{\sigma}y||_{M^{-1}_B}}{|(\lambda-\delta_\ell)(\sigma-\delta_\ell)^{\psi}|}   + \max_{\ell>\kappa} \frac{|\lambda-\sigma|^{\psi+1}|| 
M_{E}y||_{M^{-1}_B}}{|(\lambda-\delta_\ell)(\sigma-\delta_\ell)^{\psi}|} \label{carousel7}.
 \end{equation}
\end{theorem}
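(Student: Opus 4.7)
The plan is to mimic the proof of Theorem \ref{thm:uuhat2}, exploiting the fact that the new subspace $\mathcal{U}$ strictly contains the subspace from that theorem, so that the error expansion can be trimmed of its first $\kappa$ terms. First I would exhibit an explicit vector $g\in\mathcal{U}$ rather than work directly with the minimizer $\hat{u}$: take the same coefficients ${\bf c}_1,{\bf c}_2$ as in (\ref{eq:c}) applied to $U_1$ and $U_2$, and then append a carefully chosen linear combination of the deflation vectors $v^{(1)},\ldots,v^{(\kappa)}$ that cancels out exactly the offending components of the error identity (\ref{eq:ug}). Since $\hat{u}$ minimizes $\|u-\cdot\|_{M_B}$ over $\mathcal{U}$, the bound on $\|u-g\|_{M_B}$ will automatically bound $\|u-\hat u\|_{M_B}$.

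Concretely, recall from (\ref{eq:ug}) that if $g_0 := U_1{\bf c}_1 + U_2{\bf c}_2$, then
\begin{equation*}
u - g_0 = \sum_{\ell=1}^{d}\left[\frac{-\epsilon_{\ell}(\lambda-\sigma)^{\psi}}{(\delta_{\ell}-\lambda)(\delta_{\ell}-\sigma)^{\psi}} + \frac{\gamma_{\ell}(\lambda-\sigma)^{\psi+1}}{(\delta_{\ell}-\lambda)(\delta_{\ell}-\sigma)^{\psi}}\right] v^{(\ell)}.
\end{equation*}
I would then define $g := g_0 + \sum_{\ell=1}^{\kappa} \eta_\ell v^{(\ell)}$ with $\eta_\ell$ chosen equal to the coefficient of $v^{(\ell)}$ above. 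Because $v^{(1)},\ldots,v^{(\kappa)}$ sit inside $U_3\subset\mathcal{U}$, this new $g$ still lies in $\mathcal{U}$, and the error identity collapses to a sum starting from $\ell = \kappa+1$:
\begin{equation*}
u - g = \sum_{\ell=\kappa+1}^{d} \dfrac{-\epsilon_{\ell}(\lambda-\sigma)^{\psi}}{(\delta_{\ell}-\lambda)(\delta_{\ell}-\sigma)^{\psi}}\, v^{(\ell)} + \sum_{\ell=\kappa+1}^{d} \dfrac{\gamma_{\ell}(\lambda-\sigma)^{\psi+1}}{(\delta_{\ell}-\lambda)(\delta_{\ell}-\sigma)^{\psi}}\, v^{(\ell)}.
\end{equation*}

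Next I would take the $M_B$-norm of both sides; since $\{v^{(\ell)}\}$ is $M_B$-orthonormal, the triangle inequality followed by pulling the maximum over $\ell>\kappa$ outside the sums yields
\begin{equation*}
\|u-g\|_{M_B} \leq \max_{\ell>\kappa}\frac{|\lambda-\sigma|^{\psi}}{|(\lambda-\delta_\ell)(\sigma-\delta_\ell)^{\psi}|}\left\Vert\sum_{\ell=\kappa+1}^{d}\epsilon_\ell v^{(\ell)}\right\Vert_{M_B} + \max_{\ell>\kappa}\frac{|\lambda-\sigma|^{\psi+1}}{|(\lambda-\delta_\ell)(\sigma-\delta_\ell)^{\psi}|}\left\Vert\sum_{\ell=\kappa+1}^{d}\gamma_\ell v^{(\ell)}\right\Vert_{M_B}.
\end{equation*}
Finally, upper-bounding the partial expansions by the full expansions (\ref{carousel2}) and using the identity $\|M_B^{-1}x\|_{M_B} = \|x\|_{M_B^{-1}}$ — exactly as in the last two lines of the proof of Theorem \ref{thm:uuhat2} — converts these norms into $\|E_\sigma y\|_{M_B^{-1}}$ and $\|M_E y\|_{M_B^{-1}}$, producing (\ref{carousel7}). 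The optimality of $\hat u$ gives $\|u-\hat u\|_{M_B}\le\|u-g\|_{M_B}$, finishing the proof.

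The only mildly nontrivial step is the dominance $\|\sum_{\ell=\kappa+1}^d \epsilon_\ell v^{(\ell)}\|_{M_B} \le \|\sum_{\ell=1}^d \epsilon_\ell v^{(\ell)}\|_{M_B}$, which is immediate by Pythagoras in the $M_B$-inner product thanks to $M_B$-orthonormality of the eigenbasis; I expect no genuine obstacle beyond bookkeeping the indices correctly once the deflation cancellation is set up.
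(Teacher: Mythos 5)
Your proposal is correct and follows essentially the same route as the paper: the authors likewise augment the vector $g$ from Theorem \ref{thm:uuhat2} with a combination of $v^{(1)},\ldots,v^{(\kappa)}$ (their coefficient vector ${\bf c}_3$ is exactly your $\eta_\ell$) so that the first $\kappa$ terms of the error expansion cancel, and then bound the trimmed sum by the same triangle-inequality and $\max_{\ell>\kappa}$ argument. Your explicit Pythagoras justification for replacing the partial expansions by the full norms $\|E_\sigma y\|_{M_B^{-1}}$ and $\|M_E y\|_{M_B^{-1}}$ is a detail the paper leaves implicit, but it is valid given the $M_B$-orthonormality of the eigenbasis.
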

\begin{proof}
 Let us define the vector $g:=U_1{\bf c}_1+U_2{\bf c}_2+U_{3}{\bf c}_3$ where
 \begin{align*}
 {\bf c}_1 = -\left[1,\lambda-\sigma,\ldots,(\lambda-\sigma)^{\psi-1}\right]^T,\ \ \ {\bf c}_2 = \left[\lambda-\sigma,\ldots,(\lambda-\sigma)^{\psi}\right]^T, \\
 {\bf c}_3 = \left[\dfrac{\gamma_1(\lambda-\sigma)^{\psi+1}-\epsilon_{1}(\lambda-\sigma)^{\psi}}{(\delta_{1}-\lambda)(\delta_{1}-\sigma)^{\psi}},\ldots,\dfrac{\gamma_\kappa(\lambda-\sigma)^{\psi+1}-\epsilon_{\kappa}(\lambda-\sigma)^{\psi}}{(\delta_{\kappa}-\lambda)(\delta_{\kappa}-\sigma)^{\psi}}\right]^T.
 \end{align*}
 Since ${\bf c}_1$ and ${\bf c}_2$ are identical to those defined in (\ref{eq:c}), we can proceed 
 as in (\ref{eq:ug}) and subtract $U_{3}{\bf c}_3$. Then,
 \begin{align*}
  u-g = & \sum_{\ell=1}^{\ell=d} \dfrac{-\epsilon_{\ell}(\lambda-\sigma)^{\psi}}{(\delta_{\ell}-\lambda)(\delta_{\ell}-\sigma)^{\psi}} v^{(\ell)} + \sum_{\ell=1}^{\ell=d} \dfrac{\gamma_{\ell}(\lambda-\sigma)^{\psi+1}}{(\delta_{\ell}-\lambda)(\delta_{\ell}-\sigma)^{\psi}} v^{(\ell)} \\
        & -\sum_{\ell=1}^{\ell=\kappa} \dfrac{-\epsilon_{\ell}(\lambda-\sigma)^{\psi}}{(\delta_{\ell}-\lambda)(\delta_{\ell}-\sigma)^{\psi}} v^{(\ell)} - \sum_{\ell=1}^{\ell=\kappa} \dfrac{\gamma_{\ell}(\lambda-\sigma)^{\psi+1}}{(\delta_{\ell}-\lambda)(\delta_{\ell}-\sigma)^{\psi}} v^{(\ell)} \\
      = & \sum_{\ell=\kappa+1}^{\ell=d} \dfrac{-\epsilon_{\ell}(\lambda-\sigma)^{\psi}}{(\delta_{\ell}-\lambda)(\delta_{\ell}-\sigma)^{\psi}} v^{(\ell)} + \sum_{\ell=\kappa + 1}^{\ell=d} \dfrac{\gamma_{\ell}(\lambda-\sigma)^{\psi+1}}{(\delta_{\ell}-\lambda)(\delta_{\ell}-\sigma)^{\psi}} v^{(\ell)} \\  
 \end{align*}

 Considering the $M_B$-norm of $u-g$ gives 
 \begin{align*}\label{carousel19}
 \left\Vert u-\hat{g}\right\Vert_{M_B} &\leq \left\Vert \sum_{\ell=\kappa+1}^{\ell=d} \dfrac{-\epsilon_{\ell}(\lambda-\sigma)^{\psi}}{(\delta_{\ell}-\lambda)(\delta_{\ell}-\sigma)^{\psi}} v^{(\ell)}\right\Vert_{M_B} + \left\Vert \sum_{\ell=\kappa+1}^{\ell=d} \dfrac{\gamma_{\ell}(\lambda-\sigma)^{\psi+1}}{(\delta_{\ell}-\lambda)(\delta_{\ell}-\sigma)^{\psi}} v^{(\ell)}\right\Vert_{M_B}\\
 & \leq  \max_{\ell>\kappa} \frac{|\lambda-\sigma|^{\psi}||E_{\sigma}y||_{M^{-1}_B}}{|(\lambda-\delta_\ell)(\sigma-\delta_\ell)^{\psi}|}   + \max_{\ell>\kappa} \frac{|\lambda-\sigma|^{\psi+1}|| 
 M_{E}y||_{M^{-1}_B}}{|(\lambda-\delta_\ell)(\sigma-\delta_\ell)^{\psi}|},
 \end{align*}
 where, as previously, we made use of the expression of $E_\sigma y$ and $M_Ey$ in (\ref{carousel2}).
 Since $\hat{u}$ is the solution of $\min_{g \in {\cal U}} \|u-g\|_{M_B}$, it follows that $\|u-\hat{u}\|_{M_B} \leq \|u-\hat{g}\|_{M_B}$.
\end{proof}


\section{The RF-DDES algorithm}   \label{main_algorithm}

In this section we describe RF-DDES in terms of a formal algorithm. 

RF-DDES starts by calling a graph partitioner to partition the graph of $|A|+|M|$ into 
$p$ subdomains and reorders the matrix pencil $(A,M)$ as in (\ref{dd1}). RF-DDES then 
proceeds to the computation of those eigenvectors associated with the $nev_B^{(j)}$ 
smallest (in magnitude) eigenvalues of each matrix pencil $(B_j-\sigma M_B^{(j)},M_B^{(j)})$, 
and stores these eigenvectors in $V_j\in \mathbb{R}^{d_j \times nev_B^{(j)}},\ j=1,\ldots,p$. 
As our current implementation stands, these eigenvectors are computed by Lanczos combined 
with shift-and-invert; see \cite{Grimes:1994:SBL:190505.190526}. Moreover, while in this 
paper we do not consider any special mechanisms to set the value of $nev_B^{(j)}$, it is 
possible to adapt the work in \cite{doi:10.1137/040613767}. The next step of RF-DDES is 
to call Algorithm \ref{alg:arnoldi} and approximate ${\rm span}\{y^{(1)},\ldots,y^{(nev)}\}$ 
by ${\rm range}\{Q\}$, where $Q$ denotes the orthonormal matrix returned by Algorithm 
\ref{alg:arnoldi}. RF-DDES then builds an approximation subspace as described in Section 
\ref{RRP} and performs a Rayleigh-Ritz (RR) projection to extract approximate eigenpairs of 
$(A,M)$. The complete procedure is shown in Algorithm \ref{alg:main}.

\vspace{0.1in}
\begin{minipage}{\dimexpr\linewidth-9\fboxsep-9\fboxrule\relax}
\vbox{
\begin{algorithm0}{RF-DDES} \label{alg:main}
\betab
\>0. \> {Input:} $A,\ M,\ \alpha,\ \beta,\ \sigma,\ p,\ \{\omega_{\ell},\zeta_{\ell}\}_{\ell=1,\ldots,N_c},\ \{nev_B^{(j)}\}_{j=1,\ldots,p},\ \psi$                                  \\
\>1. \> Reorder $A$ and $M$ as in (\ref{dd1})                                                                                                                                        \\
\>2. \> For $j=1,\ldots,p$: 														                                             \\
\>3. \> \> Compute the eigenvectors associated with the $nev_B^{(j)}$ smallest \\ \> \> \> (in magnitude) eigenvalues of $(B^{(j)}_{\sigma}, M_B^{(j)})$ and store them in $V_j$     \\
\>4. \> End																	                                     \\
\>5. \> Compute $Q$ by Algorithm\_\ref{alg:arnoldi}                                                                                                                          \\
\>6. \> Form $Z$ as in (\ref{eq:z})                                                                                                                                                  \\
\>7. \> Solve the Rayleigh-Ritz eigenvalue problem: $Z^TAZ G = Z^TMZ G\hat{\Lambda}$                                                                                                 \\
\>8. \> If eigenvectors were also sought, permute the entries of each \\
\>   \> approximate eigenvector back to their original ordering    \\
\entab
\end{algorithm0}
}
\end{minipage}

The Rayleigh-Ritz eigenvalue problem at step 7) of RF-DDES can be solved either 
by a shift-and-invert procedure or by the appropriate routine in LAPACK \cite{Dongarra:1997:SUG:265932}.

\subsection{The projection matrix $Z$} \label{RRP}

Let matrix $Q$ returned by Algorithm \ref{alg:arnoldi} be written in its distributed form 
among the $p$ subdomains,
\begin{equation}
Q = 
\begin{pmatrix}
 Q_1     \\[0.3em]
 Q_2     \\[0.3em]
 \vdots  \\[0.3em]
 Q_p     \\[0.3em]
\end{pmatrix},\label{eq:q}
\end{equation}
where $Q_j\in \mathbb{R}^{s_i \times \mu},\ j=1,\ldots,p$ is local to the $j$th subdmonain
and $\mu\in \mathbb{N}^*$ denotes the total number of iterations performed by Algorithm 
\ref{alg:arnoldi}. By defining
\begin{equation} \label{trabala}
\begin{aligned}
& B^{(j)}_{\sigma}  = B_j-\sigma M_B^{(j)},  \\
&\Phi^{(j)}_{\sigma}= \left(E_j-\sigma M_E^{(j)}\right) Q_j, \\ 
&\Psi^{(j)}         = M_E^{(j)} Q_j,
\end{aligned}
\end{equation}
the Rayleigh-Ritz projection matrix $Z$ in RF-DDES can be written as:
\begin{equation}
Z = 
\begin{pmatrix}
 V_1     &        &       &        & -\Sigma_1^{(\psi)}    & \Gamma_1^{(\psi)}  \\[0.3em]
         &  V_2   &       &        & -\Sigma_2^{(\psi)}    & \Gamma_2^{(\psi)}  \\[0.3em]
         &        &\ddots &        & \vdots                & \vdots             \\[0.3em]
         &        &       &   V_p  & -\Sigma_p^{(\psi)}    & \Gamma_p^{(\psi)}  \\[0.3em]
         &        &       &        &           [Q,0_{s, (\psi-1)\mu}]            &                 \\[0.3em]
\end{pmatrix},\label{eq:z}
\end{equation}
where $0_{\chi,\psi}$ denotes a zero matrix of size $\chi \times \psi$, and
\small
\begin{equation} \label{trabala2}
\begin{aligned}
& \Sigma_j^{(\psi)}=\left[(B_{\sigma}^{(j)})^{-1}\Phi^{(j)}_{\sigma},(B_{\sigma}^{(j)})^{-1}M_B^{(j)}(B_{\sigma}^{(j)})^{-1} \Phi^{(j)}_{\sigma},\ldots,(B_{\sigma}^{(j)})^{-1}\left(M_B^{(j)}(B_{\sigma}^{(j)})^{-1}\right)^{\psi-1} \Phi^{(j)}_{\sigma}\right],\\
& \Gamma_j^{(\psi)}=\left[(B_{\sigma}^{(j)})^{-1}\Psi^{(j)},(B_{\sigma}^{(j)})^{-1}M_B^{(j)}(B_{\sigma}^{(j)})^{-1} \Psi^{(j)},\ldots,(B_{\sigma}^{(j)})^{-1}\left(M_B^{(j)}(B_{\sigma}^{(j)})^{-1}\right)^{\psi-1}\Psi^{(j)}\right].
\end{aligned}
\end{equation}
\normalsize
When $M_E$ is a nonzero matrix, the size of matrix $Z$ is $n \times (\kappa + 2 \psi \mu)$. 
However, when $M_E \equiv 0_{d,s}$, as is the case for example when $M$ is the identity 
matrix, the size of $Z$ reduces to $n \times (\kappa + \psi \mu)$ since $\Gamma_j^{(\psi)} 
\equiv 0_{d_j,\psi \mu}$. The total memory overhead associated with the $j$th subdomain 
in RF-DDES is at most that of storing $d_j(nev_B^{(j)}+2\psi \mu)+s_i\mu$ floating-point numbers.

\subsection{Main differences with AMLS}
Both  RF-DDES  and AMLS  exploit  the  domain decomposition  framework
discussed in  Section \ref{ddf}. However,  the two methods have  a few
important differences.

In  contrast to  RF-DDES which  exploits Algorithm  \ref{alg:arnoldi},
AMLS  approximates  the  part  of the  solution  associated  with  the
interface  variables of  $(A,M)$ by  solving a  generalized eigenvalue
problem  stemming  by a  first-order  approximation  of the  nonlinear
eigenvalue problem  in (\ref{schur_eigvec3}). More  specifically, AMLS
approximates            ${\rm            span}\left            \lbrace
  \left[y^{(1)},\ldots,y^{(nev)}\right]\right  \rbrace$  by  the  {\rm
  span} of the  eigenvectors associated with a few  of the eigenvalues
of  smallest magnitude  of the  SPD pencil  $(S(\sigma),-S'(\sigma))$,
where  $\sigma$  is  some  real shift  and  $S'(\sigma)$  denotes  the
derivative of  $S(.)$ at  $\sigma$.  In the  standard AMLS  method the
shift  $\sigma $  is  zero.   While AMLS  avoids  the  use of  complex
arithmetic,     a     large      number     of     eigenvectors     of
$(S(\sigma),-S'(\sigma))$ might  need be computed. Moreover,  only the
{\rm  span} of  those  vectors $y^{(i)}$  for  which $\lambda_i$  lies
sufficiently close  to $\sigma$  can be  captured very  accurately. In
contrast,      RF-DDES     can      capture      all     of      ${\rm
  span}\left            \lbrace\left[y^{(1)},\ldots,y^{(nev)}\right]\right            \rbrace$  to   high  accuracy
regardless  of where  $\lambda_i$ is  located inside  the interval  of
interest.

Another difference between RF-DDES and AMLS concerns the way in which 
 the two schemes approximate 
${\rm span}\left \lbrace\left[u^{(1)},\ldots,u^{(nev)}\right]\right \rbrace$. As can 
be easily verified, AMLS is similar to RF-DDES with the choice $\psi=1$ \cite{AMLS2}. 
While it is possible to combine AMLS with higher values of $\psi$, this might not 
always lead to a significant increase in the accuracy of the approximate eigenpairs 
of $(A,M)$ due to the inaccuracies in the approximation of ${\rm span}\left \lbrace
\left[y^{(1)},\ldots,y^{(nev)}\right]\right \rbrace$. In contrast, because RF-DDES 
can compute a good approximation to the entire space 
${\rm span}\left \lbrace\left[y^{(1)},\ldots,y^{(nev)}\right]\right \rbrace$,
the accuracy of the approximate eigenpairs of $(A,M)$ can be improved
by simply increasing $\psi$ and/or $nev_B^{(j)}$ and repeating 
the Rayleigh-Ritz projection.

\section{Experiments} \label{numerical_experiments}

In this section we present numerical experiments performed in serial and distributed 
memory computing environments. The RF-KRYLOV and RF-DDES schemes were written in 
C/C++ and built on top of the PETSc \cite{petsc-web-page,petsc-user-ref,petsc-efficient} 
and Intel Math Kernel (MKL) scientific libraries. The source files were compiled with 
the Intel MPI compiler {\tt mpiicpc}, using the -O3 optimization level. For RF-DDES, 
the computational domain was partitioned to $p$ non-overlapping subdomains by the 
METIS graph partitioner \cite{METIS-SIAM}, and each subdomain was then assigned to a 
distinct processor group. Communication among different processor groups was achieved 
by means of the Message Passing Interface standard (MPI) \cite{Snir:1998:MCR:552013}. 
The linear system solutions with matrices $A-\zeta_1M,\ldots,A-\zeta_{N_c} M$ and 
$S(\zeta_1),\ldots,S(\zeta_{N_c})$ were performed by the Multifrontal Massively Parallel 
Sparse Direct Solver (MUMPS) \cite{mumps}, while those with the block-diagonal matrices 
$B_{\zeta_1},\ldots,B_{\zeta_{N_c}}$, and $B_{\sigma}$ by MKL PARDISO \cite{intel-alt}. 

The quadrature node-weight pairs $\{\omega_{\ell},\zeta_{\ell}\},\ \ell=1,\ldots,N_c$ were 
computed by the Midpoint quadrature rule of order $2N_c$, retaining only the $N_c$ quadrature 
nodes (and associated weights) with positive imaginary part. Unless stated otherwise, 
the default values used throughout the experiments are $p=2$, $N_c=2$, and $\sigma=0$,  
while $nev_B^{(1)}=\ldots=nev_B^{(p)}=nev_B$. The stopping criterion in Algorithm \ref{alg:arnoldi}, 
was set to ${\rm tol}=1e$-$6$. All computations were carried out in 64-bit (double) precision, 
and all wall-clock times reported throughout the rest of this section will be listed in seconds.

\subsection{Computational system}

The experiments were performed on the {\tt Mesabi} Linux cluster at Minnesota Supercomputing 
Institute. {\tt Mesabi} consists of 741 nodes of various configurations with a total of 
17,784 compute cores that are part of Intel Haswell E5-2680v3 processors. Each node 
features two sockets, each socket with twelve physical cores at 2.5 GHz. Moreover, each node 
is equipped with 64 GB of system memory.

\subsection{Numerical illustration of RF-DDES} \label{model}

\begin{table}
\centering
\caption{$n$: size of $A$ and $M$, $nnz(X)$: number of non-zero entries in matrix $X$. \label{testmat}}
\begin{tabular}{l|lrrrrr}
\#&Mat. pencil        & $n$   & $nnz(A)/n$ & $nnz(M)/n$ & $[\alpha,\beta]$  & $nev$   \\
\hline
1.&bcsst24       & 3,562 & 44.89  & 1.00  & [0, 352.55]  & 100            \\
2.&Kuu/Muu       & 7,102 & 47.90  & 23.95 & [0, 934.30]  & 100            \\
3.&FDmesh1       & 24,000& 4.97   & 1.00  & [0, 0.0568]  & 100            \\
4.&bcsst39       & 46,772& 44.05  & 1.00  & [-11.76, 3915.7]  & 100       \\
5.&qa8fk/qa8fm   & 66,127& 25.11  & 25.11 & [0, 15.530]  & 100            \\
\hline
\end{tabular}
\end{table}

We tested RF-DDES on the matrix pencils listed in Table \ref{testmat}. For each pencil, 
the interval of interest $[\alpha,\beta]$ was chosen so that $nev=100$. Matrix pencils 
1), 2), 4), and 5) can be found in the SuiteSparse matrix collection 
(https://sparse.tamu.edu/) \cite{Davis:2011:UFS:2049662.2049663}.
Matrix pencil 3) was obtained by a discretization of a differential eigenvalue 
problem associated with a membrane on the unit square with Dirichlet boundary 
conditions on all four edges using Finite Differences, and is of the standard 
form, i.e., $M=I$, where $I$ denotes the identity matrix of appropriate size.

\begin{table}
\centering
\tabcolsep=0.10cm
\caption{Maximum relative errors of the approximation of the lowest $nev=100$ 
eigenvalues returned by RF-DDES for the matrix pencils in Table \ref{testmat}.}
\label{tbl:15}
\begin{tabular}{@{}l|cc|c|c|cc|c|c|cc|c|c@{}}
\toprule
\multicolumn{1}{c}{}   && \multicolumn{3}{c}{$nev_B=50$} && \multicolumn{3}{c}{$nev_B=100$} && \multicolumn{3}{c}{$nev_B=200$}     \\  \hline
                       && $\psi=1$ & $\psi=2$ & $\psi=3$ && $\psi=1$ & $\psi=2$ & $\psi=3$  && $\psi=1$ & $\psi=2$ & $\psi=3$      \\  \hline
 bcsst24               && 2.2e-2 & 1.8e-3 & 3.7e-5 && 9.2e-3 & 1.5e-5 & 1.4e-7 && 7.2e-4 & 2.1e-8 & 4.1e-11  \\  \hline
 Kuu/Muu               && 2.4e-2 & 5.8e-3 & 7.5e-4 && 5.5e-3 & 6.6e-5 & 1.5e-6 && 1.7e-3 & 2.0e-6 & 2.3e-8   \\  \hline    
 FDmesh1               && 1.8e-2 & 5.8e-3 & 5.2e-3 && 6.8e-3 & 2.2e-4 & 5.5e-6 && 2.3e-3 & 1.3e-5 & 6.6e-8   \\  \hline
 bcsst39               && 2.5e-2 & 1.1e-2 & 8.6e-3 && 1.2e-2 & 7.8e-5 & 2.3e-6 && 4.7e-3 & 4.4e-6 & 5.9e-7   \\  \hline
 qa8fk/qa8fm           && 1.6e-1 & 9.0e-2 & 2.0e-2 && 7.7e-2 & 5.6e-3 & 1.4e-4 && 5.9e-2 & 4.4e-4 & 3.4e-6   \\  
\hline\bottomrule
\end{tabular}
\end{table}
Table \ref{tbl:15} lists the maximum (worst-case) relative error among all $nev$ 
approximate eigenvalues returned by RF-DDES. In agreement with the discussion in 
Section \ref{interior_part}, exploiting higher values of $\psi$ and/or $nev_B$ 
leads to enhanced accuracy. 
Figure \ref{btm2} plots the relative errors among all $nev$ approximate eigenvalues 
(not just the worst-case errors) for the largest matrix pencil reported in Table 
\ref{testmat}. Note that ``qa8fk/qa8fm'' is a positive definite pencil, i.e., all of
its eigenvalues are positive. Since $\sigma=0$, we expect the algebraically smallest
eigenvalues of $(A,M)$ to be approximated more accurately. Then, increasing the value 
of $\psi$ and/or $nev_B$ mainly improves the accuracy of the approximation of those 
eigenvalues $\lambda$ located farther away from $\sigma$. A similar pattern was also 
observed for the rest of the matrix pencils listed in Table \ref{testmat}.


\begin{figure} 
\centering
{\includegraphics[width=0.33\textwidth]{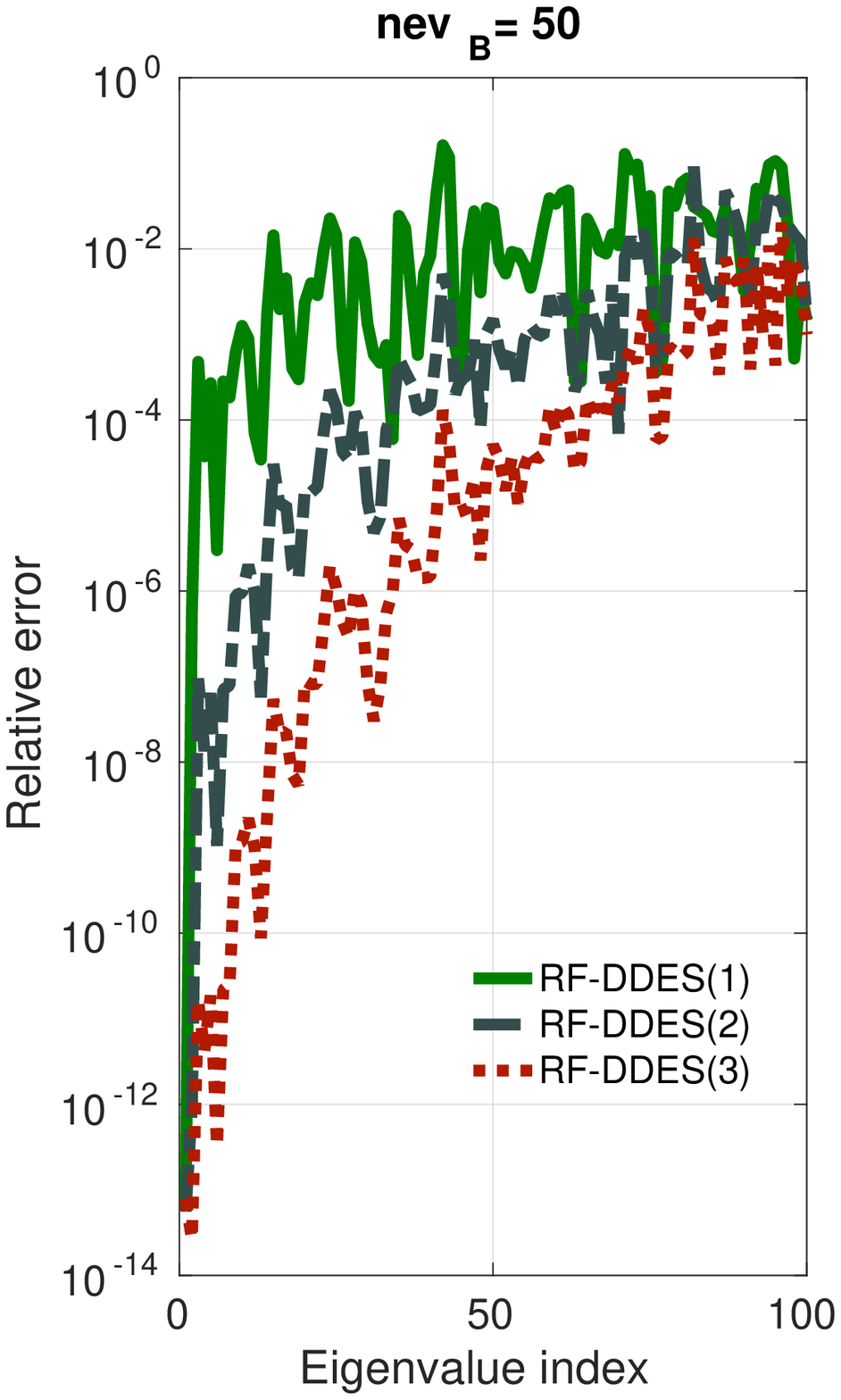}}
{\includegraphics[width=0.33\textwidth]{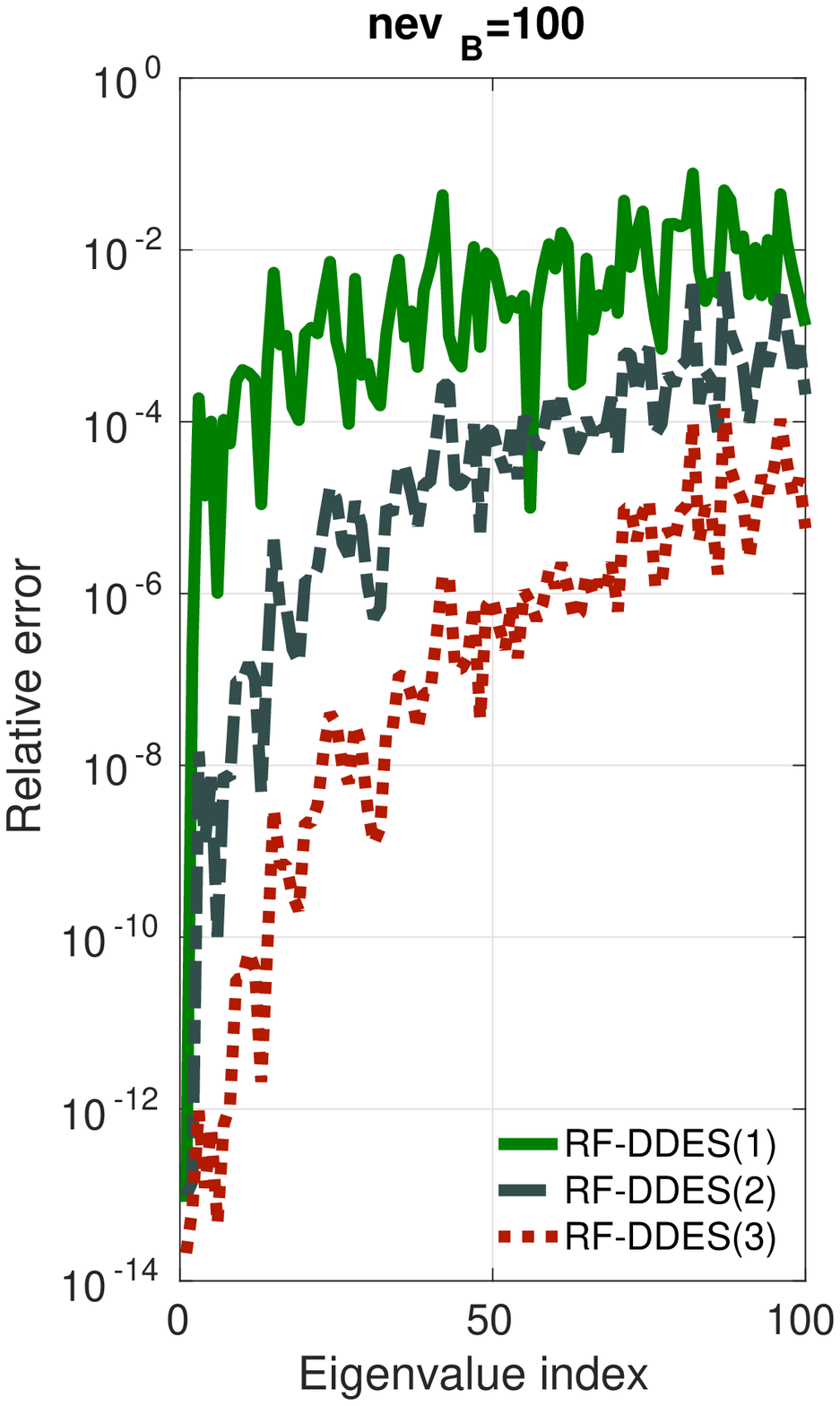}}
{\includegraphics[width=0.33\textwidth]{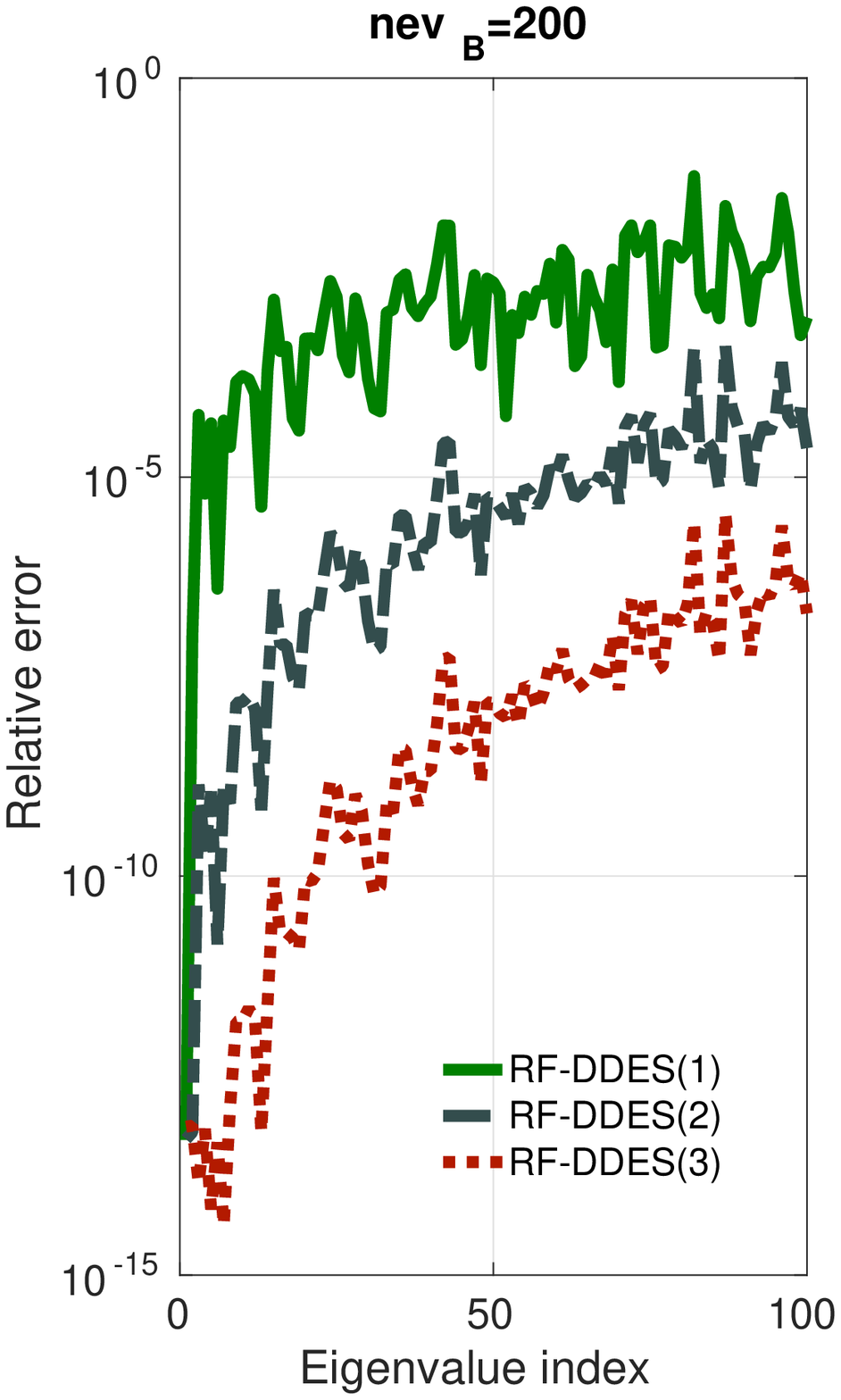}}
\caption{Relative errors of the approximation of the lowest $nev=100$ eigenvalues for the ``qa8fk/qafm'' matrix pencil.
Left: $nev_B=50$. Center: $nev_B=100$. Right: $nev_B=200$. \label{btm2}}
\end{figure}

\begin{figure} 
\centering
{\includegraphics[width=0.49\textwidth]{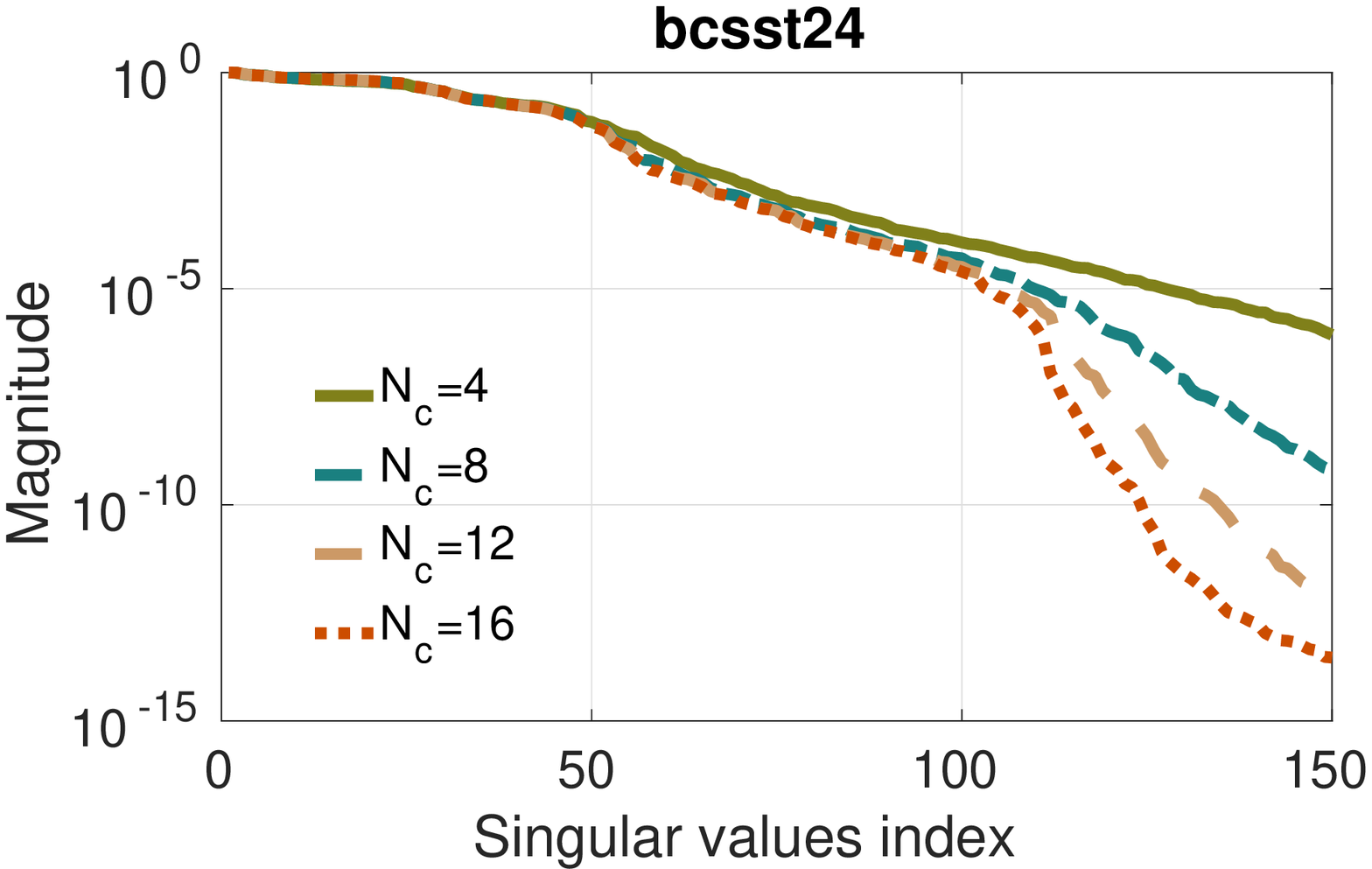}}
{\includegraphics[width=0.46\textwidth]{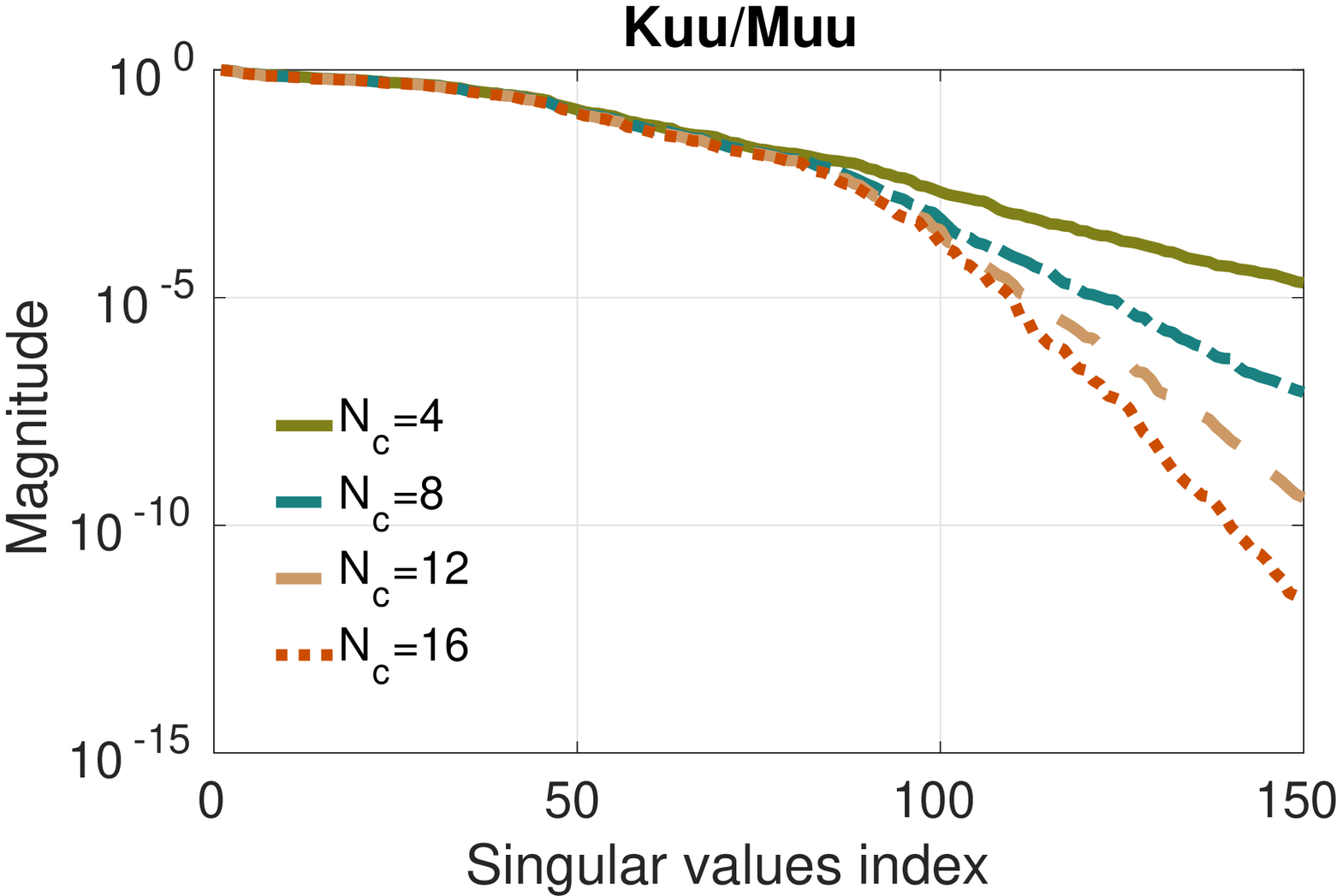}}
\caption{The 150 leading singular values of $\Re e \left \lbrace\sum_{\ell=1}^{N_c} \omega_{\ell}
S_{\zeta_{\ell}}^{-1}\right \rbrace$ for matrix pencils ``bcsst24'' and ``Kuu/Muu''. \label{fig:batch1}}
\end{figure}

\begin{table}
\centering
\caption{Number of iterations performed by Algorithm \ref{alg:arnoldi} for the 
matrix pencils listed in Table \ref{testmat}. '$s$' denotes the number of 
interface variables. \label{schur_arnoldi_iters}}
\begin{tabular}{|l|c|c|c|c|c|c|c|}
\hline
Mat. pencil & $s$  & $s/n$ & $N_c=2$ & $N_c=4$ & $N_c=8$ & $N_c=12$ & $N_c=16$ \\
\hline 
bcsst24     & 449 & 0.12 & 164 & 133 & 111 & 106 & 104                       \\ \hline
Kuu/Muu     & 720 & 0.10 & 116 & 74  & 66  & 66  & 66                        \\ \hline
FDmesh1     & 300 & 0.01 & 58  & 40  & 36  & 35  & 34                        \\ \hline 
bcsst39     & 475 & 0.01 & 139 & 93  & 75  & 73  & 72                        \\ \hline
qa8fk/qa8fm & 1272& 0.01 & 221 & 132 & 89  & 86  & 86                        \\ 
\hline
\end{tabular}
\end{table}

Table \ref{schur_arnoldi_iters} lists the number of iterations performed by Algorithm 
\ref{alg:arnoldi} as the value of $N_c$ increases. Observe that for matrix pencils 2), 
3), 4) and 5) this number can be less than $nev$ (recall the ``early termination'' 
property discussed in Proposition \ref{lowrank}), even for values of $N_c$ as low as 
$N_c=2$. Moreover, Figure \ref{fig:batch1} plots the 150 leading\footnote{After normalization by 
the spectral norm} singular values of matrix $\Re e \left \lbrace \sum_{\ell=1}^{N_c} 
\omega_{\ell}S(\zeta_{\ell})^{-1}\right \rbrace$ for matrix pencils ``bcsst24'' 
and ``Kuu/Muu'' as $N_c=4,\ 8,\ 12$ and $N_c=16$. In agreement with the discussion in 
Section \ref{practical_form}, the magnitude of the trailing $s-{\rm rank} \left(\left[y^{(1)},\ldots,y^{(nev)}
\right]\right)$ singular values approaches zero as the value of $N_c$ increases. 

Except the value of $N_c$, the number of subdomains $p$ might also affect the number 
of iterations performed by Algorithm \ref{alg:arnoldi}. Figure \ref{fig:numiters} 
shows the total number of iterations performed by Algorithm \ref{alg:arnoldi} when 
applied to matrix ``FDmesh1'' for $p=2,\ 4,\ 8$ and $p=16$ subdomains. For each 
different value of $p$ we considered $N_c=2,\ 4,\ 8,\ 12$, and $N_c=16$ quadrature 
nodes. The interval $[\alpha,\beta]$ was set so that it included only eigenvalues 
$\lambda_1,\ldots,\lambda_{200}$ ($nev=200$). Observe that higher values of $p$ might
lead to an increase in the number of iterations performed by Algorithm \ref{alg:arnoldi}.
For example, when the number of subdomains is set to $p=2$ or $p=4$, setting $N_c=2$ 
is sufficient for Algorithm \ref{alg:arnoldi} to terminate in less than $nev$ iterations 
On the other hand, when $p\geq 8$, we need at least $N_c \geq 4$ if a similar number 
of iterations is to be performed.
\begin{figure}
\centering
\begin{tikzpicture}[every plot/.append style={very thick}]
\begin{axis}[ xlabel={\# of subdomains ($p$)}, ylabel={\# of iterations}, legend style={at={(1.0051,0.7975)},anchor=west}, width=.65\textwidth, ] 
\addplot coordinates{ (2, 80) (4, 165) (8, 234) (16, 318) }; 
\addplot coordinates{ (2, 56) (4, 126) (8, 172) (16, 229) }; 
\addplot coordinates{ (2, 50) (4, 113) (8, 149) (16, 207) }; 
\addplot coordinates{ (2, 49) (4, 113) (8, 150) (16, 198) }; 
\addplot coordinates{ (2, 48) (4, 103) (8, 149) (16, 196) }; 
\legend{$N_c=2$, $N_c=4$, $N_c=8$, $N_c=12$, $N_c=16$} 
\end{axis} 
\end{tikzpicture}
\caption{Total number of iterations performed by Algorithm \ref{alg:arnoldi} when 
applied to matrix ``FDmesh1'' (where $[\alpha,\beta]=[\lambda_1,\lambda_{200}]$). 
Results reported are for all different combinations of $p=2,\ 4,\ 8$ and $p=16$, 
and $N_c=1,\ 2,\ 4,\ 8$ and $N_c=16$. \label{fig:numiters}}
\end{figure}
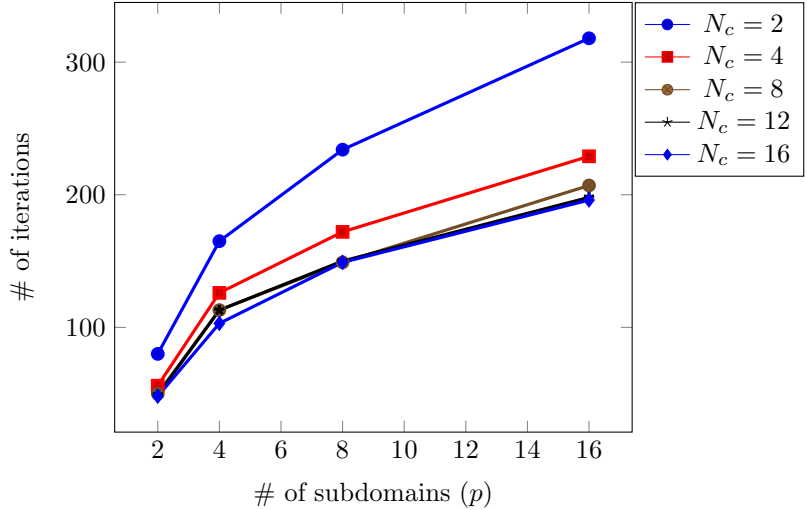
\begin{figure} 
\centering
{\includegraphics[width=0.448\textwidth]{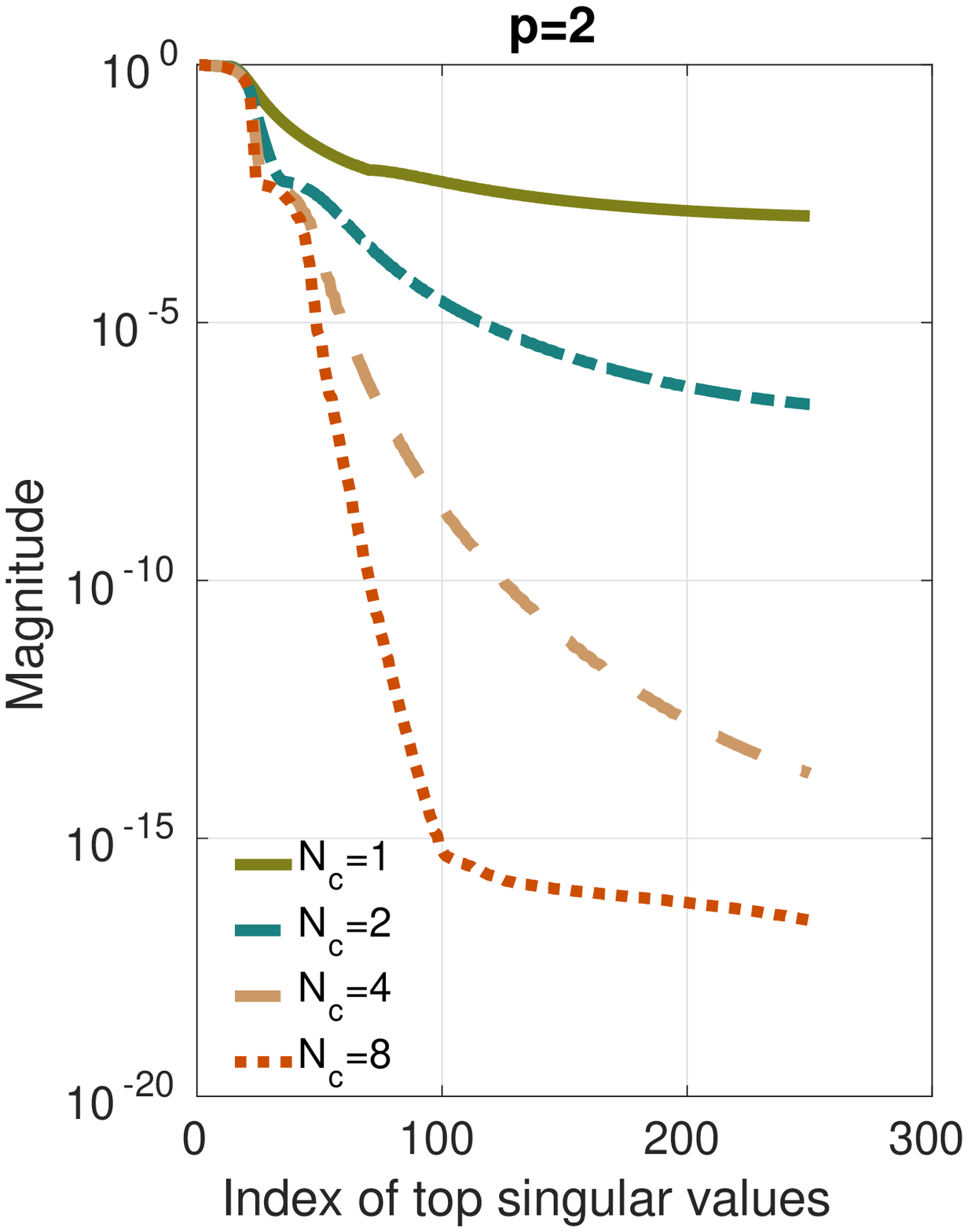}}
{\includegraphics[width=0.448\textwidth]{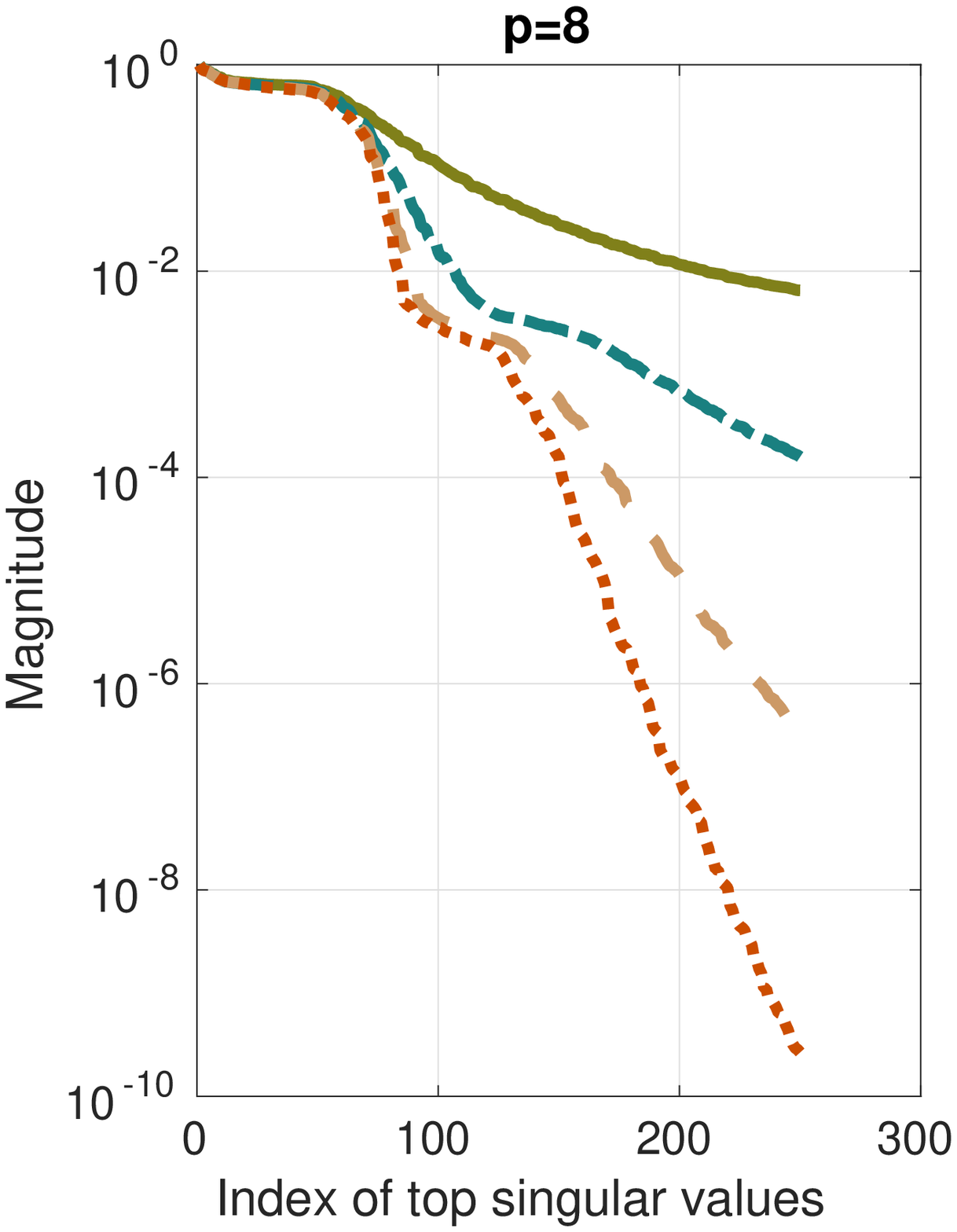}}
\caption{The leading $250$ singular values of $\Re e \left \lbrace 
\sum_{\ell=1}^{N_c} \omega_{\ell}S(\zeta_{\ell})^{-1}\right \rbrace$ 
for the same problem as in Figure \ref{fig:numiters}. Left: $p=2$. 
Right: $p=8$. For both values of $p$ we set $N_c=1,\ 2,\ 4,$ and 
$N_c=8$.\label{singvalsfdmesh1}}
\end{figure}
This potential increase in the number of iterations performed by Algorithm \ref{alg:arnoldi} 
for larger values of $p$ is a consequence of the fact that the columns of matrix 
$Y=\left[y^{(1)},\ldots,y^{(nev)}\right]$ now lie in a higher-dimensional subspace.
This might not only increase the rank of $Y$, but also affect the decay of the 
singular values of $\Re e \left \lbrace \sum_{\ell=1}^{N_c} \omega_{\ell}S(\zeta_{\ell})^{-1}\right 
\rbrace$. This can be seen more clearly in Figure \ref{singvalsfdmesh1} where we 
plot the leading $250$ singular values of 
$\Re e \left \lbrace \sum_{\ell=1}^{N_c} \omega_{\ell} S(\zeta_{\ell})^{-1}\right 
\rbrace$ of the problem in Figure \ref{fig:numiters} for two different values of 
$p$, $p=2$ and $p=8$. Note that the leading singular values decay more slowly for 
the case $p=8$. Similar results were observed for different values of $p$ and for 
all matrix pencils listed in Table \ref{testmat}.

\subsection{A comparison of RF-DDES and RF-KRYLOV in distributed computing environments}

In this section we compare the performance of  RF-KRYLOV and RF-DDES on distributed
computing environments for the matrices listed in Table \ref{testmat2}. All eigenvalue 
problems in this section are of the form $(A,I)$, i.e., standard eigenvalue problems. 
Matrices ``boneS01'' and ``shipsec8'' can be found in the SuiteSparse matrix collection.
Similarly to ``FDmesh1'', matrices ``FDmesh2'' and ``FDmesh3'' were generated by a 
Finite Differences discretization of the Laplacian operator on the unit plane using 
Dirichlet boundary conditions and two different mesh sizes so that $n=250,000$ (``FDmesh2'') 
and $n=1,000,000$ (``FDmesh3'').

\begin{table}
\small
\centering
\caption{$n$: size of $A$, $nnz(A)$: number of non-zero entries in matrix $A$. 
$s_2$ and $s_4$ denote the number of interface variables when $p=2$ and $p=4$, 
respectively. \label{testmat2}}
\begin{tabular}{l|cccccc}
\#&Matrix        & $n$       & $nnz(A)/n$ & $s_2$ & $s_4$ & $[\lambda_1,\lambda_{101},\lambda_{201},\lambda_{300}]$ \\
\hline
1.&shipsec8      & 114,919   & 28.74 & 4,534  & 9,001 & [3.2e-2, 1.14e-1, 1.57e-2, 0.20]        \\
2.&boneS01       & 172,224   & 32.03 & 10,018 & 20,451& [2.8e-3, 24.60, 45.42, 64.43]           \\
3.&FDmesh2       & 250,000   & 4.99  & 1,098  & 2,218 &[7.8e-5, 5.7e-3, 1.08e-2, 1.6e-2]        \\
4.&FDmesh3       & 1,000,000 & 4.99  & 2,196  & 4,407 & [1.97e-5, 1.4e-3, 2.7e-3, 4.0e-3]       \\
\hline
\end{tabular}
\end{table}
\normalsize

Throughout the rest of this section we will keep $N_c=2$ fixed, since this option was found 
the best both for RF-KRYLOV and RF-DDES.

\subsubsection{Wall-clock time comparisons} \label{wcexprs}
We now consider the wall-clock times achieved by RF-KRYLOV and RF-DDES when executing both schemes on 
$\tau=2,\ 4,\ 8,\ 16$ and $\tau=32$ compute cores. For RF-KRYLOV, the value of $\tau$ will denote 
the number of single-threaded MPI processes. For RF-DDES, the number of MPI processes will be equal 
to the number of subdomains, $p$, and each MPI process will utilize $\tau/p$ compute threads. Unless 
mentioned otherwise, we will assume that RF-DDES is executed with $\psi=3$ and $nev_B=100$.

\begin{table}
\centering
\tabcolsep=0.090cm
\caption{Number of iterations performed by RF-KRYLOV (denoted as RFK) and Algorithm \ref{alg:arnoldi} in RF-DDES 
(denoted by RFD(2) and RFD(4), with the number inside the parentheses denoting the value of $p$) for the matrix 
pencils listed in Table \ref{testmat2}. The convergence criterion in both RF-KRYLOV and Algorithm \ref{alg:arnoldi} 
was tested every ten iterations.} \label{tbl:16}
\resizebox{\columnwidth}{!}{
\begin{tabular}{l|cccc|cccc|cccc}
\toprule
\multicolumn{1}{c}{}   && \multicolumn{3}{c}{$nev=100$} && \multicolumn{3}{c}{$nev=200$} && \multicolumn{3}{c}{$nev=300$}   \\  \hline
            && RFK & RFD($2$) & RFD($4$) && RFK & RFD($2$) & RFD($4$) && RFK & RFD($2$) & RFD($4$)                          \\ \hline
 shipsec8   && 280 & 170 & 180 && 500 & 180 & 280 && 720 & 190 & 290 \\ \hline
 boneS01    && 240 & 350 & 410 && 480 & 520 & 600 && 620 & 640 & 740 \\ \hline
 FDmesh2    && 200 & 100 & 170 && 450 & 130 & 230 && 680 & 160 & 270 \\ \hline
 FDmesh3    && 280 & 150 & 230 && 460 & 180 & 290 && 690 & 200 & 380 \\
 \hline\bottomrule
\end{tabular}}
\end{table}

\begin{table}
\centering
\ra{0.4}
\tabcolsep=0.14cm
\caption{Maximum relative error of the approximate eigenvalues returned by RF-DDES for the matrix 
pencils listed in Table \ref{testmat2}.}
\label{tbl:17}
\resizebox{\columnwidth}{!}{
\begin{tabular}{@{}l|cc|c|c|cc|c|c|cc|c|c@{}}
\toprule
\multicolumn{1}{c}{}   && \multicolumn{3}{c}{$nev=100$} && \multicolumn{3}{c}{$nev=200$} && \multicolumn{3}{c}{$nev=300$}       \\ \hline
 $nev_B$               && $25$ & $50$ & $100$ && $25$ & $50$ & $100$  && $25$ & $50$ & $100$                \\  \hline 
 shipsec8              && 1.4e-3 & 2.2e-5 & 2.4e-6 && 3.4e-3 & 1.9e-3 & 1.3e-5 && 4.2e-3 & 1.9e-3 & 5.6e-4  \\  \hline 
 boneS01               && 5.2e-3 & 7.1e-4 & 2.2e-4 && 3.8e-3 & 5.9e-4 & 4.1e-4 && 3.4e-3 & 9.1e-4 & 5.1e-4  \\  \hline  
 FDmesh2               && 4.0e-5 & 2.5e-6 & 1.9e-7 && 3.5e-4 & 9.6e-5 & 2.6e-6 && 3.2e-4 & 2.0e-4 & 2.6e-5  \\  \hline 
 FDmesh3               && 6.2e-5 & 8.5e-6 & 4.3e-6 && 6.3e-4 & 1.1e-4 & 3.1e-5 && 9.1e-4 & 5.3e-4 & 5.3e-5  \\ 
\hline\bottomrule
\end{tabular}}
\end{table}
\begin{table}
\ra{0.7}
\tabcolsep=0.08cm
\caption{Wall-clock times of RF-KRYLOV and RF-DDES using $\tau=2,\ 4,\ 8,\ 16$ and $\tau=32$ 
computational cores. RFD(2) and RFD(4) denote RF-DDES with $p=2$ and $p=4$ subdomains, respectively.}
\label{tbl:19}
\resizebox{\columnwidth}{!}{
\begin{tabular}{@{}l|cccc|cccc|ccccc@{}}
\toprule
\multicolumn{1}{c}{}           && \multicolumn{3}{c}{$nev=100$} && \multicolumn{3}{c}{$nev=200$} && \multicolumn{3}{c}{$nev=300$}        \\ 
  Matrix                       && RFK  & RFD($2$) & RFD($4$)    && RFK & RFD($2$) & RFD($4$)     && RFK & RFD($2$) & RFD($4$)           \\ \hline
 shipsec8($\tau=2$)            && 114 & 195 & -  && 195 &207  & -   && 279 & 213  & -   \\
 \phantom{shipsec8}($\tau=4$)  && 76  & 129 & 93 && 123 &133  & 103 && 168 & 139  & 107 \\
 \phantom{shipsec8}($\tau=8$)  && 65  & 74  & 56 && 90  & 75  & 62  && 127 & 79   & 68  \\
 \phantom{shipsec8}($\tau=16$) && 40  & 51  & 36 && 66  & 55  & 41  && 92  & 57   & 45  \\
 \phantom{shipsec8}($\tau=32$) && 40  & 36  & 28 && 62  & 41  & 30  && 75  & 43   & 34  \\  \hline
  boneS01($\tau=2$)            && 94 & 292 &  -  && 194 &356  &  -   && 260&424  & -     \\
 \phantom{boneS01}($\tau=4$)   && 68 & 182 & 162 && 131 &230  & 213  && 179&277  & 260  \\
 \phantom{boneS01}($\tau=8$)   && 49 & 115 & 113 && 94  &148  & 152  && 121&180  & 187  \\
 \phantom{boneS01}($\tau=16$)  && 44 & 86  & 82  && 80  &112  & 109  && 93&137  & 132  \\
 \phantom{boneS01}($\tau=32$)  && 51 & 66  & 60  && 74  &86   &  71  && 89&105  & 79   \\  \hline
  FDmesh2($\tau=2$)            && 241& 85 &  -   && 480 &99   &  -   && 731 & 116  &  -   \\
 \phantom{FDmesh2}($\tau=4$)   && 159& 34 & 63   && 305 &37  & 78  && 473 & 43  & 85  \\
 \phantom{FDmesh2}($\tau=8$)   && 126& 22 & 23   && 228 &24  & 27  && 358 & 27  & 31  \\
 \phantom{FDmesh2}($\tau=16$)  && 89 & 16 & 15   && 171 &17  & 18  && 256 & 20  & 21  \\
 \phantom{FDmesh2}($\tau=32$)  && 51 & 12 & 12   && 94  &13  & 14  && 138 & 15  & 20  \\  \hline
  FDmesh3($\tau=2$)            && 1021& 446 & -  && 2062 & 502 & - && 3328   & 564 & -          \\
 \phantom{FDmesh3}($\tau=4$)   && 718 & 201 & 281 && 1281& 217 & 338 && 1844 & 237 & 362  \\
 \phantom{FDmesh3}($\tau=8$)   && 423 & 119 & 111 && 825 & 132 & 126 && 1250 & 143 & 141  \\
 \phantom{FDmesh3}($\tau=16$)  && 355 & 70  & 66  && 684 & 77  & 81  && 1038 & 88  & 93        \\
 \phantom{FDmesh3}($\tau=32$)  && 177 & 47  & 49  && 343 & 51  & 58  && 706  & 62  & 82        \\  
\hline\bottomrule
\end{tabular}}
\end{table}
Table \ref{tbl:19} lists the wall-clock time required by RF-KRYLOV and RF-DDES to approximate 
the $nev=100,\ 200$ and $nev=300$ algebraically smallest eigenvalues of the matrices listed 
in Table \ref{testmat2}. For RF-DDES we considered two different values of $p$; $p=2$ and $p=4$.
Overall, RF-DDES was found to be faster than RF-KRYLOV, with an increasing performance gap for 
higher values of $nev$. Table \ref{tbl:16} lists the number of iterations performed by RF-KRYLOV 
and Algorithm \ref{alg:arnoldi} in RF-DDES. For all matrices but ``boneS01'', Algorithm 
\ref{alg:arnoldi} required fewer iterations than RF-KRYLOV.  Table \ref{tbl:17} lists the 
maximum relative error of the approximate eigenvalues returned by RF-DDES when $p=4$. The 
decrease in the accuracy of RF-DDES as $nev$ increases is due the fact that $nev_B$ remains 
bounded. Typically, an increase in the value of $nev$ should be also accompanied by an increase 
in the value of $nev_B$, if the same level of maximum relative error need be retained. On the 
other hand, RF-KRYLOV always computed all $nev$ eigenpairs up to the maximum attainable accuracy.

\begin{table}
\ra{0.7}
\tabcolsep=0.08cm
\caption{Time elapsed to apply to apply the rational filter in RF-KRYLOV  and RF-DDES using
$\tau=2,\ 4,\ 8,\ 16$ and $\tau=32$ compute cores. RFD(2) and RFD(4) denote RF-DDES with 
$p=2$ and $p=4$ subdomains, respectively. For RF-KRYLOV the times listed also include the 
amount of time spent on factorizing matrices $A-\zeta_{\ell}M,\ \ell=1,\ldots,N_c$. For 
RF-DDES, the times listed also include the amount of time spent in forming and factorizing 
matrices $S_{\zeta_{\ell}},\ \ell=1,\ldots,N_c$.}
\label{tbl:20}
\resizebox{\columnwidth}{!}{
\begin{tabular}{@{}l|cccc|cccc|ccccc@{}}
\toprule
\multicolumn{1}{c}{}           && \multicolumn{3}{c}{$nev=100$}  && \multicolumn{3}{c}{$nev=200$} && \multicolumn{3}{c}{$nev=300$}        \\ 
  Matrix                       && RFK & RFD($2$) & RFD($4$)      && RFK & RFD($2$) & RFD($4$)     && RFK & RFD($2$) & RFD($4$)           \\ \hline
 shipsec8($\tau=2$)            && 104 & 153 & -&& 166 & 155 & -  && 222& 157 & -   \\
 \phantom{shipsec8}($\tau=4$)  && 71 & 93 & 75 && 107  & 96 & 80 && 137& 96 & 82   \\
 \phantom{shipsec8}($\tau=8$)  && 62 & 49 & 43 && 82  & 50 & 45  && 110& 51 & 47   \\
 \phantom{shipsec8}($\tau=16$) && 38 & 32 & 26 && 61  & 33 & 28  && 83 & 34 & 20   \\
 \phantom{shipsec8}($\tau=32$) && 39 & 21 & 19 && 59  & 23 & 20  && 68 & 24 & 22   \\ \hline
  boneS01($\tau=2$)            && 86 & 219 & - && 172 & 256 & -  && 202& 291& -    \\
 \phantom{boneS01}($\tau=4$)   && 64 & 125 & 128 && 119& 152& 168&& 150& 178& 199  \\
 \phantom{boneS01}($\tau=8$)   && 46 & 77 & 88 && 84  & 95 & 117 && 104& 112& 140  \\
 \phantom{boneS01}($\tau=16$)  && 43 & 56 & 62 && 75  & 70 & 85  && 86 & 84 & 102  \\
 \phantom{boneS01}($\tau=32$)  && 50 & 42 & 44 && 72  & 51 & 60  && 82 & 63 & 61   \\ \hline
  FDmesh2($\tau=2$)            && 227 & 52 & - && 432 & 59 & -   && 631& 65 & -    \\
 \phantom{FDmesh2}($\tau=4$)   && 152 & 22 & 36&& 287 & 24  & 42 && 426& 26 & 45   \\
 \phantom{FDmesh2}($\tau=8$)   && 122 & 13 & 14&& 215 & 14  & 16 && 335& 15 & 18   \\
 \phantom{FDmesh2}($\tau=16$)  && 85  &  9 & 8 && 164 & 10  & 10 && 242& 11 & 11   \\
 \phantom{FDmesh2}($\tau=32$)  && 50  &  6 & 6 && 90  & 7   & 8  && 127& 8  & 10   \\ \hline
  FDmesh3($\tau=2$)            && 960 & 320 & -  && 1817 & 341 & -   && 2717& 359 & -   \\
 \phantom{FDmesh3}($\tau=4$)   && 684 & 158 & 174&& 1162 & 164 & 192 && 1582& 170 & 201 \\
 \phantom{FDmesh3}($\tau=8$)   && 406 & 88  & 76&& 764  & 91 & 82  && 1114& 94  & 88    \\
 \phantom{FDmesh3}($\tau=16$)  && 347 & 45  & 43&& 656  & 48 & 49  && 976 & 51  & 52    \\
 \phantom{FDmesh3}($\tau=32$)  && 173 & 28  & 26&& 328  & 28 & 32  && 674 & 31  & 41    \\
\hline\bottomrule
\end{tabular}}
\end{table}
Table \ref{tbl:20} lists the amount of time spent on the triangular substitutions required to 
apply the rational filter in RF-KRYLOV, as well as the amount of time spent on forming and 
factorizing the Schur complement matrices and applying the rational filter in RF-DDES. For the
values of $nev$ tested in this section, these procedures were found to be the computationally most 
expensive ones.
\begin{figure} 
\centering
\subfigure[FDmesh2 ($n=250,000$).]  {\includegraphics[width=0.487\textwidth]{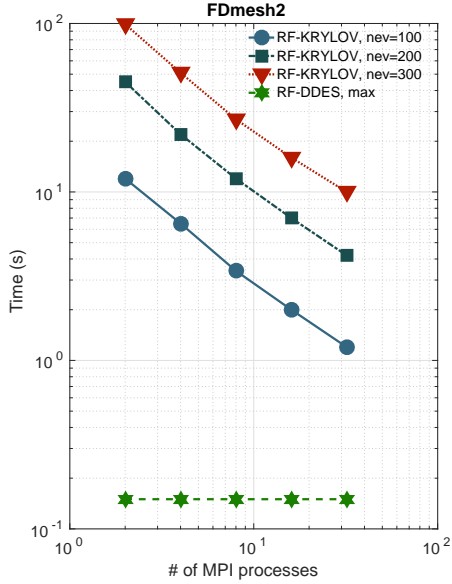}}
\subfigure[FDmesh3 ($n=1,000,000$).]{\includegraphics[width=0.49\textwidth]{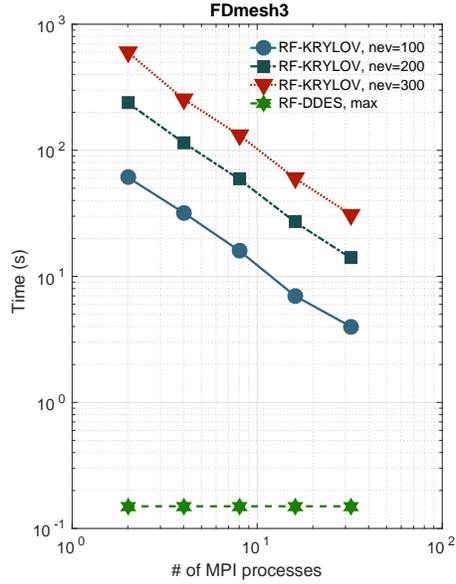}}
\caption{Time spent on orthonormalization in RF-KRYLOV and RF-DDES when computing the $nev=100,\ 
200$ and $nev=300$ algebraically smallest eigenvalues and associated eigenvectors of matrices ``FDmesh2'' 
and ``FDmesh3''. \label{fig:time_orth}}
\end{figure}
Figure \ref{fig:time_orth} plots the total amount of time spent on orthonormalization by RF-KRYLOV 
and RF-DDES when applied to matrices ``FDmesh2'' and ``FDmesh3''. For RF-KRYLOV, we report results
for all different values of $nev$ and number of MPI processes. For RF-DDES we only report the highest 
times across all different values of $nev,\ \tau$ and $p$. RF-DDES was found to spend a considerably
smaller amount of time on orthonormalization than what RF-KRYLOV did, mainly because $s$ was much 
smaller than $n$ (the values of $s$ for $p=2$ and $p=4$ can be found in Table \ref{testmat2}). Indeed, 
if both RF-KRYLOV and Algorithm \ref{alg:arnoldi} in RF-DDES perform a similar number of iterations, 
we expect the former to spend roughly $n/s$ more time on orthonormalization compared to RF-DDES.

\begin{figure}
\begin{tikzpicture}[scale=0.73]
\begin{axis}[
    ybar,
    bar width=0.30cm,
    enlarge x limits=0.15,
    enlarge y limits={0.15, upper},
    legend style={at={(0.5,-0.15)},
    anchor=north,legend columns=-1},
    ylabel={Time (s)},
    symbolic x coords={$p=2$,$p=4$,$p=8$,$p=16$,$p=32$},
    xtick=data,
    nodes near coords,
    nodes near coords align={vertical},
    ]
\addplot +[postaction={pattern=dots}] coordinates {($p=2$,157) ($p=4$,82) ($p=8$,56) ($p=16$,50) ($p=32$,49)};
\addplot +[postaction={pattern=horizontal lines}] coordinates {($p=2$,51) ($p=4$,21) ($p=8$,10) ($p=16$,6) ($p=32$,4)};
\addplot +[postaction={pattern=north east lines}] coordinates {($p=2$,213) ($p=4$,107) ($p=8$,72) ($p=16$,64) ($p=32$,62)};
\legend{Interface,Interior,Total}
\end{axis}
\end{tikzpicture}
\qquad
\begin{tikzpicture}[scale=0.73]
\begin{axis}[
    ybar,
    bar width=0.30cm,
    enlarge x limits=0.15,
    enlarge y limits={0.15, upper},
    legend style={at={(0.5,-0.15)},
      anchor=north,legend columns=-1},
    ylabel={Time (s)},
    symbolic x coords={$p=2$,$p=4$,$p=8$,$p=16$,$p=32$},
    xtick=data,
    nodes near coords,
    nodes near coords align={vertical},
    ]
\addplot +[postaction={pattern=dots}] coordinates {($p=2$,65) ($p=4$,45) ($p=8$,31) ($p=16$,26) ($p=32$,27)};
\addplot +[postaction={pattern=horizontal lines}] coordinates {($p=2$,48) ($p=4$,38) ($p=8$,28) ($p=16$,21) ($p=32$,15)};
\addplot +[postaction={pattern=north east lines}] coordinates {($p=2$,116) ($p=4$,85) ($p=8$,63) ($p=16$,53) ($p=32$,48)};
\legend{Interface,Interior,Total}
\end{axis}
\end{tikzpicture}
\caption{Amount of time required to apply the rational filter (``Interface''), form the subspace 
associated with the interior variables (``Interior''), and total wall-clock time (``Total'') 
obtained by an MPI-only execution of RF-DDES for the case where $nev=300$. Left: ``shipsec8''. 
Right: ``FDmesh2''.} \label{tikz1}
\end{figure}
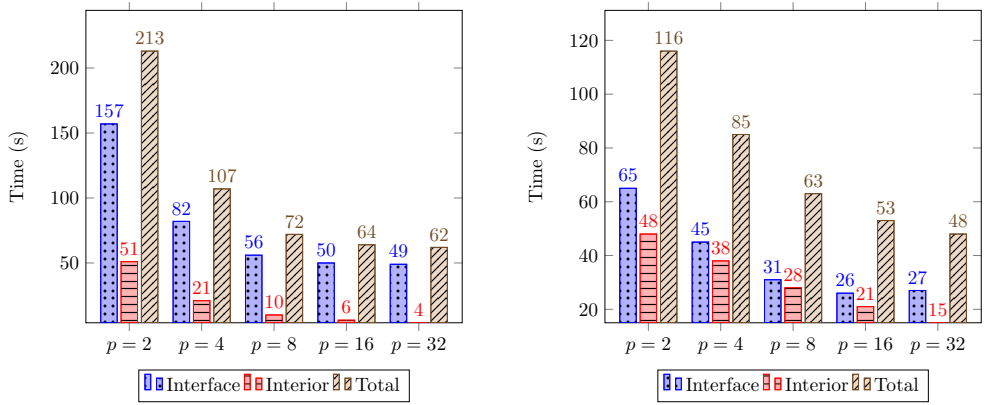
Figure \ref{tikz1} lists the wall-clock times achieved by an MPI-only implementation of 
RF-DDES, i.e., $p$ still denotes the number of subdomains but each subdomain is handled 
by a separate (single-threaded) MPI process, for matrices ``shipsec8'' and ``FDmesh2''.
In all cases, the MPI-only implementation of RF-DDES led to higher wall-clock times than 
those achieved by the hybrid implementations discussed in Tables \ref{tbl:19} and \ref{tbl:20}. 
More specifically, while the MPI-only implementation reduced the cost to construct and 
factorize the distributed $S_{\zeta_{\ell}}$ matrices, the application of the rational 
filter in Algorithm \ref{alg:arnoldi} became more expensive due to: a) each 
linear system solution with $S_{\zeta_{\ell}}$ required more time, b) a larger number of 
iterations had to be performed as $p$ increased (Algorithm \ref{alg:arnoldi} required 190, 
290, 300, 340 and 370 iterations for ``shipsec8'', and 160, 270, 320, 350, and 410 
iterations for ``FDmesh2'' as $p=2,\ 4,\ 8,\ 16$ and $p=32$, respectively). One more 
observation is that for the MPI-only version of RF-DDES, its scalability for increasing
values of $p$ is limited by the scalability of the linear system solver which is typically
not high. This suggests that reducing $p$ and applying RF-DDES recursively to the local 
pencils $(B_{\sigma}^{(j)},M_B^{(j)}),\ j=1,\ldots,p$ might be the best combination when 
only distributed memory parallelism is considered.

\section{Conclusion} \label{conclusion}

In this paper we proposed a rational filtering domain decomposition approach (termed as RF-DDES) for 
the computation of all eigenpairs of real symmetric pencils inside a given interval $[\alpha,\beta]$. 
In contrast with rational filtering Krylov approaches, RF-DDES applies the rational filter only to the 
interface variables. This has several advantages. First, orthogonalization is performed on vectors 
whose length is equal to the number of interface variables only. Second, the Krylov projection method 
may converge in fewer than $nev$ iterations. Third, it is possible to solve the original eigenvalue 
problem associated with the interior variables in real arithmetic and with trivial parallelism with 
respect to each subdomain. RF-DDES can be considerably faster than rational filtering Krylov approaches, 
especially when a large number of eigenvalues is located inside $[\alpha,\beta]$.

In future work, we aim to extend RF-DDES by taking advantage of additional levels of parallelism.
In addition to the ability to divide the initial interval $[\alpha,\beta]$ into non-overlapping 
subintervals and process them in parallel, e.g. see \cite{spectrumslicing,pfeast}, we can also 
assign linear system solutions associated with different quadrature nodes to different groups of 
processors. Another interesting direction is to consider the use of iterative solvers to solve the 
linear systems associated with $S(\zeta_1),\ldots,S(\zeta_{N_c})$. This could be helpful when 
RF-DDES is applied to the solution of symmetric eigenvalue problems arising from 3D domains. 
On the algorithmic side, it would be of interest to develop more efficient criteria to set the value 
of $nev_B^{(j)},\ j=1,\ldots,p$ in each subdomain, perhaps by adapting the work in \cite{doi:10.1137/040613767}. 
In a similar context, it would be interesting to also explore recursive implementations of RF-DDES. 
For example, RF-DDES could be applied individually to each matrix pencil $(B_\sigma^{(j)},M_B^{(j)}),\ 
j=1,\ldots,p$ to compute the $nev_B^{(j)}$ eigenvectors of interest. This could be particularly 
helpful when either $d_j$, the number of interior variables of the $j$th subdomain, or $nev_B^{(j)}$, 
are large.

\section{Acknowledgments}

Vassilis Kalantzis was partially supported by a Gerondelis Foundation Fellowship. The authors 
acknowledge the Minnesota Supercomputing Institute (MSI; http://www.msi.umn.edu) at the University 
of Minnesota for providing resources that contributed to the research results reported within 
this paper.

\bibliographystyle{siam} 
\bibliography{local2}

\end{document}